\title{Some results on the saturation number for unions of cliques}
\date{}
\author{Fan Chen$^{1,\, 2}$, Xiying Yuan$^{1,\, 2}$\thanks{Corresponding author. Email address: xiyingyuan@shu.edu.cn (Xiying Yuan)\\ \indent chenfan@shu.edu.cn (Fan Chen)\\ \indent This work is supported by the National Nature Science Foundation of China (Nos.11871040,
12271337, 12371347)}}
\affil{\small{\emph{1. Department of Mathematics, Shanghai University, Shanghai 200444, P.R. China}}\\
{\footnotesize\emph{2. Newtouch Center for Mathematics of Shanghai University, Shanghai 200444, P.R. China}}}
\begin{document}
\newtheorem{theorem}{Theorem}[section]
\newtheorem{assumption}[theorem]{Assumptio}
\newtheorem{corollary}[theorem]{Corollary}
\newtheorem{proposition}[theorem]{Proposition}
\newtheorem{lemma}[theorem]{Lemma}
\newtheorem{definition}[theorem]{Definition}
\newtheorem{remark}[theorem]{Remark}
\newtheorem{problem}[theorem]{Problem}
\newtheorem{fact}[theorem]{Fact}
\newtheorem{claim}{Claim}
\newtheorem{conjecture}[theorem]{Conjecture}
\maketitle
\noindent\rule[0pt]{17cm}{0.09em}

\noindent{\bf Abstract}

\noindent Graph $G$ is $H$-saturated if $H$ is not a subgraph of $G$ and $H$ is a subgraph of $G+e$ for any edge $e$ not in $G$. The saturation number for a graph $H$ is the minimal number of edges in any $H$-saturated graph of order $n$. In this paper, the saturation number for $K_p\cup (t-1)K_q$ ($t\geqslant 3$ and $2\leqslant p<q$) is determined, and the extremal graph for $K_p\cup 2K_q$ is determined. Moreover, the saturation number and the extremal graph for $K_p\cup K_q\cup K_r$ ($ r\geqslant p+q$) are completely determined.

\noindent{\bf Keywords:} Saturation number, Disjoint union of cliques, Extremal graph

\noindent\rule[0pt]{17cm}{0.05em}

\section{Introduction}
The vertex set and the edge set of a graph $G$ are denoted by $V(G)$ and $E(G)$. For any two graphs $G$ and $H$, $G\cup H$ is the union of the graph $G$ and $H$ with $V(G\cup H)=V(G)\cup V(H)$ and $E(G\cup H)=E(G)\cup E(H)$. The join of the graph $G$ and $H$, denoted by $G\vee H$, is the graph obtained from $G\cup H$ by adding edges between $V(G)$ and $V(H)$.

A graph $G$ is $H$-saturated if $H$ is not a subgraph of $G$ and $H$ is a subgraph of $G+e$ for any edge $e$ not in $G$. The saturation number for a graph $H$, denoted by $sat(n,H)$, is the minimal number of edges in any $H$-saturated graph of order $n$. In this paper, we call $G$ an extremal graph for $H$, if $G$ is an $H$-saturated graph of order $n$ with $e(G)=sat(n,H)$. Saturation numbers were first studied by P. Erd\H{o}s, A. Hajnal and J. W. Moon in \cite{PAJ}. In this paper, we mainly focus on the saturation numbers for unions of cliques. Denote a complete graph and an independent set of order $n$ by $K_n$ and $I_n$. In \cite{PAJ}, it was proved that $sat(n,K_r)=(r-2)(n-r+2)+\binom{r-2}{2}$ and the unique extremal graph for $K_r$ is $K_{r-2}\vee I_{n-r+2}$. L. K\'{a}szonyi and Z. Tuza in \cite{LZ} determined $sat(n,tK_2)$ and the extremal graph for $tK_2$. We refer the reader to \cite{C, CFFGJM, CLLYZ}, \cite{JRJ}, \cite{ HL}, and \cite{LGS, LSWZ, O, PS} for more results about the saturation number, and \cite{JRJ} is an excellent dynamic survey.

J. Faudree, R. Faudree and J. Schmitt suggested to investigate $sat(n,K_p\cup K_q\cup K_r)$ (see Problem 3 in \cite{JRJ}). The saturation number for $tK_p$ and the extremal graph for $3K_p$ have been determined in \cite{RMRM}. In this paper, we prove that
$$sat(n,K_p\cup (t-1)K_q)=(p-2)(n-p+2)+\binom{p-2}{2}+(t-1)\binom{q+1}{2},$$
where $2\leqslant p<q$, $n>q(q+1)(t-1)+3(p-2)$, and $t\geqslant 3$ (see Theorem \ref{ptq}).

Suppose that $2\leqslant p_1\leqslant \cdots\leqslant p_t$, and write
$$H(n;p_1,p_2,\cdots,p_t)\cong K_{p_1-2}\vee (K_{p_2+1}\cup \cdots\cup K_{p_t+1}\cup I_{n-t+3-\sum_{i=1}^tp_i}).$$
When $t=2$, it was proved that $H(n;p,q)$ is the extremal graph for $K_p\cup K_q$ (see Theorem 2.3 in \cite{RMRM}).
When $t=3$,
we will prove that $H(n;2,q,q)$ is the unique extremal graph for $K_p\cup 2K_q$ (see Theorem \ref{p2q}).
Consering the extremal graph for general $K_p\cup (t-1)K_q$, we propose the following conjecture.

\begin{conjecture}
Suppose $2\leqslant p\leqslant q$, $n>(t-1)(q+1)^2+3(p-2)$ and $t\geqslant 4$.
$H(n;p,q,\cdots,q)$ is the unique extremal graph for $K_p\cup (t-1)K_q$.
\end{conjecture}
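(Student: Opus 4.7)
The plan is to mirror the strategy of Theorem~\ref{p2q} (the $t=3$ case) and lift it to arbitrary $t \geqslant 4$. Since Theorem~\ref{ptq} already gives the edge count $(p-2)(n-p+2)+\binom{p-2}{2}+(t-1)\binom{q+1}{2}$ of any extremal graph when $p<q$, and an analogous counting argument should handle $p=q$, the task reduces essentially to a structural uniqueness statement.

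I first verify saturation of $H(n;p,q,\ldots,q)$. Non-containment: any $K_p$ must combine the universal $K_{p-2}$ with an edge from some $K_{q+1}$-block, consuming two of its vertices and leaving only $t-2$ untouched blocks, hence at most $t-2$ disjoint $K_q$'s. Saturation: a short case check on the location of the added edge (inside $I$, between two blocks, or between a block and $I$) produces the missing $K_q$.

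For uniqueness, let $G$ be an extremal graph and set $S = \{v\in V(G):d_G(v)=n-1\}$. The key claim is $|S| = p-2$: the upper bound follows from the exact edge count together with $n>(t-1)(q+1)^2+3(p-2)$, since an extra universal vertex would push the edge count above $sat(n,K_p\cup(t-1)K_q)$; the lower bound follows from the saturation condition by exhibiting, under the assumption $|S|<p-2$, a non-edge whose addition cannot simultaneously create a $K_p$ and $t-1$ vertex-disjoint $K_q$'s, in the manner already used in Theorems~\ref{ptq} and \ref{p2q}. Setting $G' = G-S$, so that $|V(G')|=n-p+2$ and $e(G')=(t-1)\binom{q+1}{2}$, the saturation of $G$ translates (by absorbing $S$ into the $K_p$) into the pair of conditions that $G'$ contains no vertex-disjoint $K_2 \cup (t-1)K_q$, while $G'+e$ does for every non-edge $e$. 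The remaining task is to deduce that every edge of $G'$ lies in a $K_{q+1}$-component, and that exactly $t-1$ such components appear.

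The main obstacle is this last structural step. As $t$ grows, there are many more ways to distribute $(t-1)\binom{q+1}{2}$ edges in $G'$ while still meeting both halves of the saturation condition, and one must rule out configurations such as a $K_{q+2}$ together with $t-3$ smaller blocks, or cliques sharing a vertex, or a $K_q$ plus a pendant edge. The hypothesis $n>(t-1)(q+1)^2+3(p-2)$ guarantees a large reservoir of isolated vertices in $G'$, which enables replacement arguments: given a hypothesized non-canonical $G'$, one selects an isolated vertex $w$ and a carefully chosen non-edge $e$ and checks that either $G'+e$ still avoids the forbidden vertex-disjoint $K_2 \cup (t-1)K_q$ (contradicting saturation) or that $G'$ already contained it (contradicting non-containment). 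Maintaining uniform combinatorial bookkeeping across the many wrong configurations, without double-counting or overlooking a case, is where the genuine difficulty lies.
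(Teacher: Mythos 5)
This statement is Conjecture 1.1 of the paper; the authors offer no proof of it, so there is nothing to compare your argument against --- the paper establishes only the saturation \emph{number} for $K_p\cup(t-1)K_q$ with $p<q$ (Theorem \ref{ptq}) and the uniqueness of the extremal graph in the single case $t=3$ (Theorem \ref{p2q}). Your submission is therefore being judged purely as a standalone proof, and as such it is not one: it is a roadmap that stops exactly at the point where the difficulty begins. You yourself write that the remaining task is to show every edge of $G'$ lies in one of exactly $t-1$ components isomorphic to $K_{q+1}$, and that ruling out configurations such as a $K_{q+2}$ with $t-3$ smaller blocks, overlapping cliques, or a $K_q$ with a pendant edge ``is where the genuine difficulty lies.'' That step is the entire content of the conjecture; the reduction preceding it (universal vertex set $S$ of size $p-2$, edge count of $G'$, translation of saturation into a condition on $G'$) is the routine part and is essentially Lemma \ref{detla} restated. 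A proof that announces the hard case analysis without performing it has not proved anything.

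Two further concrete gaps. First, your reduction for the edge count silently excludes $p=q$: Theorem \ref{ptq} is proved only for $p<q$, and for $p=q$ the statement becomes uniqueness of the extremal graph for $tK_q$, which the cited literature settles only for $t=3$; ``an analogous counting argument should handle $p=q$'' is an assertion, not an argument. Second, your translation of the saturation condition to $G'$ is only half correct as stated: from a copy of $K_p\cup(t-1)K_q$ in $G+e$ you cannot immediately conclude that $G'+e$ contains a vertex-disjoint $K_2\cup(t-1)K_q$, because the copy in $G+e$ may route some of its $K_q$'s through vertices of $S$; this needs (and in the paper's $t=3$ argument receives) a separate accounting. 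If you want to make progress on the conjecture, the place to start is the analogue of the chain of deductions in Theorem \ref{p2q}: the inequality $|R|q\leqslant\sum_{x\in R}d_{H_{vw}}(x)\leqslant e(G[\overline{S}])-e(H_{vw})=(t-1)q$ forcing $|R|\leqslant t-1$, then showing each $x\in R$ attaches to a single block and that no edges run between distinct blocks --- and it is precisely this last step, with $\binom{t-1}{2}$ pairs of blocks to control instead of one, that no one has yet carried out.
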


When $2\leqslant p\leqslant q$ and $r\geqslant p+q$, we prove that
$$sat(n,K_p\cup K_q\cup K_{r})=(p-2)(n-p+2)+\binom{p-2}{2}+\binom{q+1}{2}+\binom{r+1}{2},$$
and $H(n;p,q,r)$ is the unique extremal graph for $K_p\cup K_q\cup K_{r}$ where $n>3(p-2)+q(q+1)+r(r+1)$ (see Theorem \ref{pqr}).

\section{Preliminaries}
For any vertex $v$ in $V(G)$, $N_G(v)$ is the set of the neighbors of $v$ and $N_G[v]=N_G(v)\cup\{v\}$. The degree of a vertex $v$ is $d_G(v)=|N_G(v)|$. If there is no ambiguity, then we write $N_G(v)=N(v)$ and $d_G(v)=d(v)$. Let the minimum degree of $G$ be $\delta(G)$. For $S\subseteq V(G)$, we write $\overline{S}=V(G)\setminus S$ and $G[S]$ is the subgraph of $G$ induced by the vertices in $S$. Let $G-S$ be the subgraph induced by $G[V(G)\setminus S]$. The order and size of $G$ are denoted by $v(G)$ and $e(G)$.

Next, we may prove several elementary properties common to all $K_{p_1}\cup \cdots\cup K_{p_t}$-saturated graph.

\begin{lemma}\label{detla}
Suppose $t\geqslant 2$, $n>3(p_1-2)+\sum_{i=2}^tp_i(p_i+1)$. Let $G$ be a $K_{p_1}\cup \cdots\cup K_{p_t}$-saturated graph and
$e(G)\leqslant e(H(n;p_1,\cdots,p_t)).$ Let $v$ be a vertex of minimum degree in $V(G)$. Then, we have

\noindent (i) $d(v)=p_1-2$.

\noindent (ii) $N(v)\subseteq N(w)$ for any $w\in \overline{N(v)}$.

\noindent (iii) $e(G[\overline{N(v)}])\leqslant\sum_{i=2}^t\binom{p_i+1}{2}.$

\end{lemma}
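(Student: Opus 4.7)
The plan is to prove (i) first; parts (ii) and (iii) follow by short direct arguments.

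For (i), note first that $v$ must have at least one non-neighbor (otherwise $\delta(G)=n-1$ and $G=K_n$, contradicting $K_{p_1}\cup\cdots\cup K_{p_t}$-freeness since $n$ is large). Picking any such $w$, saturation gives a copy of $K_{p_1}\cup\cdots\cup K_{p_t}$ in $G+vw$ with $vw$ lying in some $K_{p_j}$ piece, so $|N(v)\cap N(w)|\geqslant p_j-2\geqslant p_1-2$ and moreover $N(v)$ contains a $(p_1-2)$-clique; hence $d(v)\geqslant p_1-2$. For the upper bound, suppose instead $d(v)=\delta(G)=d\geqslant p_1-1$ and aim for $e(G)>e(H)$. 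Writing $d(u)=1+|N(u)\cap N(v)|+|N(u)\cap \overline{N[v]}|$ for $u\in N(v)$, exploiting $\sum_{u\in N(v)}|N(u)\cap N(v)|=2e(G[N(v)])\geqslant (p_1-2)(p_1-3)$, the double-count $\sum_{u\in N(v)}|N(u)\cap \overline{N[v]}|=\sum_{w\in \overline{N[v]}}|N(w)\cap N(v)|\geqslant (n-1-d)(p_1-2)$ from saturation, and $\sum_{u\in \overline{N[v]}}d(u)\geqslant d(n-1-d)$ from $\delta=d$, one gets
\[
2e(G)\geqslant 2d+(p_1-2)(p_1-3)+(n-1-d)(p_1-2+d).
\]
Subtracting $2e(H)=(p_1-2)(p_1-3)+2(p_1-2)(n-p_1+2)+\sum_{i\geqslant 2}p_i(p_i+1)$ and setting $f(d):=2d+(n-1-d)(p_1-2+d)-2(p_1-2)(n-p_1+2)$, one computes $f(p_1-1)=n-3(p_1-2)$, which by hypothesis exceeds $\sum_{i\geqslant 2}p_i(p_i+1)$; the factorisation $f(d)-f(p_1-1)=(d-(p_1-1))(n-2(p_1-2)-d)$ then gives $f(d)\geqslant f(p_1-1)$ throughout $d\in [p_1-1,\,n-2(p_1-2)]$, forcing $e(G)>e(H)$ and the desired contradiction. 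The outlying regime $d>n-2(p_1-2)$ is handled by the cruder $2e(G)\geqslant nd$, which alone dominates $2e(H)=O(n)$ for such $d$.

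Given (i), part (ii) is immediate: for $w\in \overline{N(v)}\setminus\{v\}$ saturation gives $|N(v)\cap N(w)|\geqslant p_j-2$ while $|N(v)\cap N(w)|\leqslant |N(v)|=p_1-2$, pinning $p_j=p_1$ and $N(v)\cap N(w)=N(v)$, so $N(v)\subseteq N(w)$. The same argument identifies $N(v)$ as a $(p_1-2)$-clique, giving $e(G[N(v)])=\binom{p_1-2}{2}$. Part (iii) follows by counting: (ii) implies every $u\in N(v)$ sees all of $\overline{N(v)}$, so $e(N(v),\overline{N(v)})=(p_1-2)(n-p_1+2)$, and combining with $e(G)\leqslant e(H)=\binom{p_1-2}{2}+(p_1-2)(n-p_1+2)+\sum_{i=2}^t\binom{p_i+1}{2}$,
\[
e(G[\overline{N(v)}])=e(G)-\binom{p_1-2}{2}-(p_1-2)(n-p_1+2)\leqslant \sum_{i=2}^t\binom{p_i+1}{2}.
\]

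The main obstacle is the upper bound in (i): the naive averaging $2e(G)\geqslant n(p_1-1)$ is too weak once $p_1\geqslant 3$, so the key is to retain both the saturation-forced $(p_1-2)$-contribution from each non-neighbor of $v$ and the clique contribution from inside $N(v)$, and then verify via the factorisation of $f(d)-f(p_1-1)$ that the bound persists across the full range of $d$; this is precisely where the hypothesis $n>3(p_1-2)+\sum_{i\geqslant 2}p_i(p_i+1)$ is consumed.
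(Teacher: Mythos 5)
Your proof is correct and follows essentially the same route as the paper: the same degree-sum lower bound $2e(G)\geqslant 2\delta+(p_1-2)(p_1-3)+(n-1-\delta)(p_1-2+\delta)$, compared against $2e(H)$ via the value of the resulting quadratic at $\delta=p_1-1$, with (ii) and (iii) obtained by the identical pigeonhole and edge-counting arguments. The only (cosmetic) difference is that you establish $f(\delta)\geqslant f(p_1-1)$ by factoring $f(d)-f(p_1-1)$ and treating large $\delta$ with the crude bound $2e(G)\geqslant n\delta$, whereas the paper locates $\delta\leqslant 2e(H)/n$ to the left of the parabola's axis of symmetry.
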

\begin{proof}[\rm{\textbf{Proof}.}]
We know that
\begin{equation}\label{a}
e(G)\leqslant e(H(n;p_1,\cdots,p_t))=(p_1-2)(n-p_1+2)+\binom{p_1-2}{2}+\sum_{i=2}^t\binom{p_i+1}{2}.
\end{equation}
Write
$a=(p_1-2)(n-p_1+2)+\binom{p_1-2}{2}+\sum_{i=2}^t\binom{p_i+1}{2}$.

Since $G$ is $K_{p_1}\cup\cdots\cup K_{p_t}$-free, $G\neq K_n$, and then $d_G(v)\leqslant n-2$. Since $G$ is $K_{p_1}\cup\cdots\cup K_{p_t}$-saturated, for any vertex $w$ in $\overline{N[v]}$, $G+vw$ contains a subgraph $K_{p_1}\cup \cdots\cup K_{p_t}$. Then the new edge $vw$ lies in a clique of order at least $p_1$ in $G+vw$. So, $d(v)=\delta(G)\geqslant p_1-2$. Hence, $w$ has at least $p_1-2$ neighbors in $N(v)$, which implies
$e(G[\overline{N(v)},N(v)])\geqslant (n-\delta-1)(p_1-2)+\delta.$
And there is a copy of $K_{p_1-2}$ in $G[N(v)]$, which implies
$e(G[N(v)])\geqslant \binom{p_1-2}{2}.$

To complete the proof of (i). Suppose to the contrary that $d(v)\geqslant p_1-1$. Write $\delta(G)=\delta$ for brevity. We have
\begin{align*}
\sum_{u\in N(v)}d_G(u)&=2e(G[N(v)])+e(G[\overline{N(v)},N(v)])\\
&\geqslant2\binom{p_1-2}{2}+(n-\delta-1)(p_1-2)+\delta.
\end{align*}
This yields that
\begin{align}\label{1}
2e(G)&=\sum_{u\in N(v)}d_G(u)+\sum_{u\in \overline{N(v)}}d_G(u)\notag\\
&\geqslant2\binom{p_1-2}{2}+(n-\delta-1)(p_1-2)+\delta+\delta(n-\delta).
\end{align}

Define the function
$$f(x)=2\binom{p_1-2}{2}+(n-x-1)(p_1-2)+x+x(n-x),$$
and then $f(\delta)\leqslant 2e(G)$ from (\ref{1}). By the assumption $n>3(p_1-2)+\sum_{i=2}^tp_i(p_i+1)$, we have
\begin{align*}
\frac{2a}{n}&=\frac{2\left[(p_1-2)(n-p_1+2)+\binom{p_1-2}{2}+\sum_{i=2}^t\binom{p_i+1}{2}\right]}{n}\\
&=2(p_1-2)+\frac{-p_1^2+4+3(p_1-2)+\sum_{i=2}^tp_i(p_i+1)}{n}\\
&<2(p_1-2)+\frac{n-p_1^2+4}{n}\\
&\leqslant2p_1-3\\
&\leqslant\frac{p_1(p_1+1)+2p_1-3}{2}\\
&\leqslant\frac{-p_1+3+3(p_1-2)+\sum_{i=2}^tp_i(p_i+1)}{2}\\
&<\frac{n-p_1+3}{2}.
\end{align*}
Observe that the axis of symmetry of $f(x)$ is $x=\frac{n-p_1+3}{2}$. Since
$$p_1-1\leqslant\delta \leqslant \frac{2e(G)}{n}\leqslant\frac{2a}{n}<\frac{n-p_1+3}{2},$$
we have $f(p_1-1)\leqslant f(\delta)$. Using equations (\ref{a}) and (\ref{1}), we get
$f(p_1-1) \leqslant f(\delta)\leqslant 2e(G)\leqslant 2a$, and then $f(p_1-1)\leqslant 2a$, i.e,
\begin{align*}
&2\binom{p_1-2}{2}+(n-p_1)(p_1-2)+(p_1-1)(n-p_1+1)\leqslant\\
&2(p_1-2)(n-p_1+2)+2\binom{p_1-2}{2}+\sum_{i=2}^tp_i(p_i+1).
\end{align*}
Hence, we have
$n\leqslant 3(p_1-2)+\sum_{i=2}^tp_i(p_i+1).$
This is a contradiction to the assumption $n>3(p_1-2)+\sum_{i=2}^tp_i(p_i+1)$. So, $\delta(G)=p_1-2$.

Now we prove part (ii). For any vertex $w$ in $\overline{N[v]}$, $w$ has at least $p_1-2$ neighbors in $N(v)$. Since $|N(v)|=p_1-2$, we have $N(v)\subseteq N(w)$.

Now we prove part (iii). Since $|N(v)|=p_1-2$, $vw$ lies in a clique of order $p_1$ in $G+vw$ and $G[N(v)]\cong K_{p_1-2}$. The fact $N(v)\subseteq N(w)$ for any $w\in \overline{N(v)}$ implies
$e(G[N(v),\overline{N(v)}])=(p_1-2)(n-p_1+2).$
By (\ref{a}), we get
\begin{align*}
e(G) &=e(G[N(v)])+e(G[N(v),\overline{N(v)}])+e(G[\overline{N(v)}])\\
&=\binom{p_1-2}{2}+(p_1-2)(n-p_1+2)+e(G[\overline{N(v)}])\\
&\leqslant \binom{p_1-2}{2}+(p_1-2)(n-p_1+2)+\sum_{i=2}^t\binom{p_i+1}{2}.
\end{align*}
Hence,
$e(G[\overline{N(v)}])\leqslant\sum_{i=2}^t\binom{p_i+1}{2}.$
\end{proof}
Suppose $t\geqslant 2$, $n>3(p_1-2)+\sum_{i=2}^tp_i(p_i+1)$. Let $G$ be a $K_{p_1}\cup \cdots\cup K_{p_t}$-saturated graph of order $n$ and $e(G)\leqslant e(H(n;p_1,\cdots,p_t))$. Let $v$ be a vertex of minimum degree in $G$. Write $S=N_G(v)$. For a vertex $w\in \overline{S}\setminus\{v\}$, $G+vw$ contains a subgraph $K_{p_1}\cup\cdots\cup K_{p_t}$. By Lemma \ref{detla} (i), $d_G(v)=p_1-2$ and then the new edge $vw$ lies in the copy of $K_{p_1}$ induced by $S\cup\{v,w\}$ in $G+vw$. Furthermore, there is a subgraph $K_{p_2}\cup \cdots \cup K_{p_t}$ in $G$. In this paper, we always use $H_{vw}$ to represent this subgraph $K_{p_2}\cup\cdots\cup K_{p_t}$. If $p_1=2$, then $\delta(G)=0$, and set $S=\emptyset$ and $\overline{S}=V(G)$.

For any vertex $u\in\overline{N[v]}$, if $u\sim w$, then $u\in V(H_{vw})$. By Lemma \ref{detla} (ii), we have $S\subseteq N_G(u)\cap N_G(w)$, and then $G[S\cup\{u,w\}]\cong K_{p_1}$. If $u\notin V(H_{vw})$, then $G[S\cup\{u,w\}]\cup H_{vw}$ is a subgraph $K_{p_1}\cup\cdots\cup K_{p_t}$ in $G$, which is a contradiction. The above facts can be organized as follows.

\begin{lemma}\label{re}
Suppose $t\geqslant 2$, $n>3(p_1-2)+\sum_{i=2}^tp_i(p_i+1)$. Let $G$ be a $K_{p_1}\cup \cdots\cup K_{p_t}$-saturated graph of order $n$ and $e(G)\leqslant e(H(n;p_1,\cdots,p_t))$. Let $v$ be a vertex of minimum degree in $G$ and $w\in \overline{S}\setminus\{v\}$.

\noindent (i) There is a subgraph $H_{vw}$ in $G$.

\noindent (ii) For any vertex $u\in\overline{S}\setminus\{v\}$, if $u$ is adjacent to $w$, then $u\in V(H_{vw})$.
\end{lemma}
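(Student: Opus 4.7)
The plan is to read the lemma as a clean packaging of the informal argument given just before the statement, using saturation together with Lemma~\ref{detla}. For part~(i), I will first invoke the saturation hypothesis on the non-edge $vw$ (valid since $w\in\overline{S}\setminus\{v\}$ means $w\notin N[v]$): the graph $G+vw$ contains a copy of $K_{p_1}\cup\cdots\cup K_{p_t}$. By Lemma~\ref{detla}(i), $d_G(v)=p_1-2$, so $v$ has exactly $p_1-2$ neighbors in $G+vw$ other than $w$. Hence the component of the new copy containing the edge $vw$ must be a $K_{p_1}$ whose vertex set is precisely $S\cup\{v,w\}$. Removing this component leaves a $K_{p_2}\cup\cdots\cup K_{p_t}$ disjoint from $\{v,w\}\cup S$, and since it uses no vertex incident to the new edge, it already lives in $G$ itself. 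This is the desired $H_{vw}$.

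For part~(ii), I will argue by contradiction. Fix $u\in\overline{S}\setminus\{v\}$ with $u\sim w$, and assume $u\notin V(H_{vw})$. Since $u\in\overline{N(v)}$, Lemma~\ref{detla}(ii) gives $S\subseteq N_G(u)$, and similarly $S\subseteq N_G(w)$ because $w\in\overline{N(v)}$ as well. Combined with $G[S]\cong K_{p_1-2}$ (from Lemma~\ref{detla} as noted in its proof) and the edge $uw$, this forces $G[S\cup\{u,w\}]\cong K_{p_1}$. Because $u\notin V(H_{vw})$ and $w\notin V(H_{vw})$ (the latter since $w$ is incident to the new edge in $G+vw$, hence not in the untouched part), and because $S\cap V(H_{vw})=\emptyset$ (every vertex of $H_{vw}$ lies in $\overline{N[v]}$), the cliques $G[S\cup\{u,w\}]$ and $H_{vw}$ are vertex-disjoint. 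Their union is therefore a copy of $K_{p_1}\cup K_{p_2}\cup\cdots\cup K_{p_t}$ in $G$, contradicting the fact that $G$ is $K_{p_1}\cup\cdots\cup K_{p_t}$-saturated and hence $K_{p_1}\cup\cdots\cup K_{p_t}$-free.

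The only step that needs a little care is checking that the relevant vertex sets really are disjoint, so that the union in the contradiction step genuinely yields the forbidden subgraph; this is where the hypothesis $u\in\overline{S}\setminus\{v\}$ and the structural information $S\cap V(H_{vw})=\emptyset$ coming from Lemma~\ref{detla}(ii) are used. Apart from this bookkeeping the proof is essentially a transcription of the paragraph preceding the statement, so I expect it to be short; no new estimates beyond those already established in Lemma~\ref{detla} are required.
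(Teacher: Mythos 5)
Your proof is correct and follows essentially the same route as the paper: part (i) uses saturation on the non-edge $vw$ together with $d_G(v)=p_1-2$ from Lemma \ref{detla}(i) to pin the clique containing $vw$ down to $S\cup\{v,w\}$, and part (ii) derives a contradiction from $G[S\cup\{u,w\}]\cup H_{vw}$ being a forbidden copy, exactly as in the paragraph preceding the lemma. The extra disjointness bookkeeping you supply is a correct (and slightly more careful) rendering of what the paper leaves implicit.
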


\section{The saturation number for $K_p\cup (t-1)K_q$ ($2\leqslant p< q$)}

L. K\'asonyi and Z. Tuza determined $sat(n,tK_2)=3t-3$, and the extremal graph for $tK_2$ is $(t-1)K_3\cup (n-3t+3)K_1$ when $n\geqslant 3t-3$ (see Corollary 5 in \cite{LZ}). For general $tK_p$, R. Faudree, M. Ferrara, R. Gould, and M. Jacobson determined that
$$sat(n,tK_p)=(p-2)(n-p+2)+\binom{p-2}{2}+(t-1)\binom{p+1}{2},$$
and the extremal graph for $3K_p$ is $H(n;p,p,p)$ where $n\geqslant tp(p+1)-p^2+2p-6$ and $p\geqslant 3$ (see Theorem 2.1 and Theorem 2.3 in \cite{RMRM}). We will further consider the saturation number for $K_p\cup (t-1)K_q$ when $2\leqslant p<q$ and $t\geqslant 3$ in this section.
\begin{lemma}\label{saturated} $H(n;p,q,\cdots,q)$ is a $K_p\cup(t-1) K_q$-saturated graph.
\end{lemma}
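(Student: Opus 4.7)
The plan is to verify both conditions for saturation directly from the explicit structure of $H = H(n;p,q,\ldots,q)$. Write $A$ for the universal $K_{p-2}$, $B_1,\ldots,B_{t-1}$ for the $t-1$ copies of $K_{q+1}$, and $J$ for the independent set of size $n-(p-2)-(t-1)(q+1)$, so that $H = A \vee (B_1 \cup \cdots \cup B_{t-1} \cup J)$. The key structural facts are: outside $A$ the only edges lie inside the $B_i$'s, and every vertex of $A$ is adjacent to every other vertex.

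To show $H$ does not contain $K_p\cup(t-1)K_q$, I would first observe that every clique of $H$ splits as $A'\cup C$ with $A'\subseteq A$ and $C$ a clique of the subgraph $B_1\cup\cdots\cup B_{t-1}\cup J$. Any such $C$ with $|C|\ge 2$ lies in a single $B_i$, since there are no edges between distinct $B_i$'s nor inside $J$. The non-$A$ part of a $K_p$ has size $|C|\ge 2$, and the non-$A$ part of a $K_q$ has size $|C|\ge q-(p-2)=q-p+2\ge 3$ because $p<q$; in particular both kinds of cliques have non-$A$ parts confined to one $B_i$. A copy of $K_p\cup(t-1)K_q$ would supply $t$ cliques but only $t-1$ of the $B_i$ are available, so the pigeonhole principle produces some $B_{i_0}$ that hosts the non-$A$ parts of two of the cliques, say with sizes $|C_1|$ and $|C_2|$. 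Disjointness inside $B_{i_0}$ forces $|C_1|+|C_2|\le q+1$, while the $A$-parts of these two cliques are disjoint subsets of the $(p-2)$-set $A$, giving $|C_1|+|C_2|\ge p+q-(p-2)=q+2$ if one of the cliques is the $K_p$ and the other a $K_q$, and $|C_1|+|C_2|\ge 2q-(p-2)>q+1$ if both are $K_q$'s (again using $p<q$). Either possibility contradicts $|C_1|+|C_2|\le q+1$.

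For saturation, let $e=uv$ be any non-edge of $H$. The non-edges fall into three cases: both endpoints in $J$; one in $J$ and one in some $B_i$; or endpoints in two distinct $B_i,B_j$. In every case $u,v$ lie outside $A$, so in $H+e$ the set $A\cup\{u,v\}$ spans a $K_p$. For each $k\in\{1,\ldots,t-1\}$ I would pick any $q$ vertices of $B_k$ avoiding $\{u,v\}$, which is possible because $|B_k|=q+1$ and at most one of $u,v$ lies in $B_k$; these $t-1$ copies of $K_q$ are mutually disjoint and disjoint from $A\cup\{u,v\}$, producing the required $K_p\cup(t-1)K_q$ in $H+e$. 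The main point to get right is the pigeonhole argument together with the size estimates coming from $|A|=p-2$ and $p<q$; the saturation side is a routine case check.
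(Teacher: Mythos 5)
Your proof is correct and follows essentially the same route as the paper: the same decomposition of $H(n;p,q,\ldots,q)$ into the universal $K_{p-2}$, the $K_{q+1}$-blocks and the independent set, a pigeonhole/vertex-count argument for $K_p\cup(t-1)K_q$-freeness, and the same explicit construction of $K_p\cup(t-1)K_q$ after adding any non-edge. Your freeness argument is in fact slightly more complete than the paper's, since you also rule out two $K_q$'s sharing a block, a case the paper's count of $v(G_1\vee G_i)=p+q-1$ handles only implicitly.
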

\begin{proof}[\rm{\textbf{Proof}.}]
For brevity, write $G= H(n;p,q,\cdots,q)=G_1\vee(G_2\cup\cdots \cup G_t\cup I_{n-p-(t-1)(q+1)+2})$, where $G_1\cong K_{p-2}$ and $G_i\cong K_{q+1}$ for $2\leqslant i\leqslant t$.

There is not a subgraph $K_{p}$ in $G_1$, since $v(G_1)=p-2$. We can not find both $K_{p}$ and $K_{q}$ in any $G_1\vee G_i$, since $v(G_1\vee G_i)=p+q-1$. Hence, there is no subgraph $K_{p}\cup (t-1)K_{q}$ in $G_1\vee(G_2\cup\cdots \cup G_t)$. Then $G$ is $K_{p}\cup (t-1)K_{q}$-free.

For any non-edge $uv$ in $G$, then $u$ and $v$ lie in distinct $G_i$ or in $G_i$ and the independent set $I$ or in the independent set $I$. Observe that $G^*[V(G_1)\cup\{u,v\}]\cong K_{p}$ in the graph $G^*=G+uv$ and there is a subgraph isomorphic to $(t-1)K_{q}$ in $G-V(G_1)\cup\{u,v\}$. Hence, $G$ is $K_{p}\cup(t-1) K_{q}$-saturated.
\end{proof}

Lemma \ref{saturated} implies that
$sat(n,K_p\cup (t-1)K_q)\leqslant e(H(n;p,q,\cdots,q)).$
We may prove $sat(n,K_p\cup (t-1)K_q)=e(H(n;p,q,\cdots,q)).$
\begin{theorem}\label{ptq}
Suppose $2\leqslant p< q$, $n> q(q+1)(t-1)+3(p-2)$, and $t\geqslant 3$. Then
$$sat(n,K_p\cup (t-1)K_q)=(p-2)(n-p+2)+\binom{p-2}{2}+(t-1)\binom{q+1}{2}.$$
\end{theorem}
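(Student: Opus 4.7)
Since Lemma \ref{saturated} gives $sat(n, K_p \cup (t-1)K_q) \leqslant e(H(n;p,q,\cdots,q))$, my plan is to establish the matching lower bound: every $K_p \cup (t-1)K_q$-saturated graph $G$ of order $n$ with $e(G) \leqslant e(H(n;p,q,\cdots,q))$ actually satisfies $e(G) = e(H(n;p,q,\cdots,q))$. Applying Lemma \ref{detla} with $p_1=p$ and $p_2=\cdots=p_t=q$ to a vertex $v$ of minimum degree yields $d(v)=p-2$, $S:=N(v)\cong K_{p-2}$ with $S\subseteq N(w)$ for every $w\in R:=\overline{S}$, and $e(G[R])\leqslant (t-1)\binom{q+1}{2}$. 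Together with the decomposition $e(G)=\binom{p-2}{2}+(p-2)(n-p+2)+e(G[R])$, the task reduces to proving the reverse inequality $e(G[R])\geqslant (t-1)\binom{q+1}{2}$.

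To prove this lower bound, I would fix $w\in R\setminus\{v\}$ and write $H_{vw}=C_1\cup\cdots\cup C_{t-1}$ with each $C_i\cong K_q$ (Lemma \ref{re} (i)). For each component $C_i$ and each $c\in V(C_i)$, I extract a completion vertex $y_c$ as follows: $V(C_i)\setminus\{c\}$ is a $K_{q-1}$ of neighbors of $c$ in $R$, so by Lemma \ref{re} (ii) it lies in $V(H_{vc})$, and being a clique it must sit inside a single $K_q$ component of $H_{vc}$, whose remaining vertex I call $y_c$. Then $y_c$ is adjacent to every vertex of $V(C_i)\setminus\{c\}$, and a pigeonhole step gives the central dichotomy: either (collision) some $y_{c_1}=y_{c_2}$ with distinct $c_1,c_2\in V(C_i)$ occurs, in which case this common vertex is adjacent to $V(C_i)\setminus\{c_1\}$ and to $V(C_i)\setminus\{c_2\}$, hence to all of $V(C_i)$, so $V(C_i)\cup\{y_{c_1}\}$ is a $K_{q+1}$; or (all-distinct) the $q$ values $\{y_c:c\in V(C_i)\}$ are pairwise distinct, forcing every $u\in V(C_i)$ to satisfy $d_{G[R]}(u)\geqslant 2q-2$ (it is adjacent to $V(C_i)\setminus\{u\}$ and to the $q-1$ distinct $y_c$'s with $c\neq u$).

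An edge count then drives the conclusion. If every $C_i$ falls in the all-distinct case, summing $d_{G[R]}(u)\geqslant 2q-2$ over $u\in V(H_{vw})$ gives $e(G[R])\geqslant (t-1)q(q-1)$; for $q\geqslant 4$ this strictly exceeds $(t-1)\binom{q+1}{2}$ and contradicts Lemma \ref{detla} (iii), so at least one $C_i$ must instead lie in the collision case. Iterating this reasoning --- either inductively after removing a confirmed $K_{q+1}$ and reapplying Lemma \ref{re} to the reduced graph, or by a uniform count that weighs both kinds of components simultaneously --- should produce a $K_{q+1}$ extending every $V(C_i)$ when $q\geqslant 4$, while for $q=3$ the all-distinct degree-sum bound is already tight and directly gives $e(G[R])=(t-1)\binom{q+1}{2}$.

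The main obstacle I anticipate is arranging the $t-1$ cliques $V(C_i)\cup\{y_i\}\cong K_{q+1}$ to be pairwise vertex-disjoint: a priori some $y_i$ could lie in $V(C_j)$ with $j\neq i$ or coincide with another $y_j$. Any such shared vertex $y$ would satisfy $d_{G[R]}(y)\geqslant 2q$, and tracking how this degree excess propagates through the above counts --- combined with the saturation property applied to non-edges $\{u,w'\}$ with $u\in V(C_i)$ and $w'\in R\setminus V(H_{vw})$, of which the hypothesis $n>q(q+1)(t-1)+3(p-2)$ supplies a large reservoir --- should rule out every residual overlapping configuration. Once disjointness is established, the $(t-1)$ disjoint $K_{q+1}$'s contribute exactly $(t-1)\binom{q+1}{2}$ edges to $G[R]$, matching the upper bound of Lemma \ref{detla} (iii), so $e(G)=e(H(n;p,q,\cdots,q))$ and the proof is complete.
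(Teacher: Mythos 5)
Your overall framing (reduce to the lower bound $e(G[\overline{S}])\geqslant (t-1)\binom{q+1}{2}$ via Lemma \ref{detla} and the subgraphs $H_{vw}$, $H_{vc}$ of Lemma \ref{re}) matches the paper, but there is a genuine gap at the foundational step of your construction: the completion vertex $y_c$ is not well defined. You correctly note that $V(C_i)\setminus\{c\}$ is a $(q-1)$-clique contained in $V(H_{vc})$ (Lemma \ref{re}~(ii)), but then assert that ``being a clique it must sit inside a single $K_q$ component of $H_{vc}$.'' That inference is unjustified: $H_{vc}$ is only a subgraph of $G$ isomorphic to $(t-1)K_q$, and $G$ may contain edges joining distinct components of $H_{vc}$, so a clique of $G$ with vertices in $V(H_{vc})$ can straddle several components, in which case no single ``remaining vertex'' adjacent to all of $V(C_i)\setminus\{c\}$ is guaranteed. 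Controlling exactly such cross edges is the whole difficulty of the theorem --- the paper introduces the excess graph $H'=G[V(H_{vw})]\setminus E(H_{vw})$ precisely to track them, and the analogous ``a $(q-1)$-clique lies in one component'' step appears in the paper only in the $t=3$ argument, after an edge count has already forced $u_i\nsim v_j$ for all $i,j$. Without $y_c$ the collision/all-distinct dichotomy, and everything downstream of it, collapses; excluding the straddling configurations needs casework of the same order as the paper's.

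Even granting $y_c$, the steps you defer with ``should'' are the real content and are unlikely to go through as sketched. Your endgame --- $(t-1)$ pairwise disjoint copies of $K_{q+1}$, one over each $V(C_i)$ --- is strictly stronger than what the theorem needs and would, combined with Lemma \ref{detla}~(iii), essentially determine $G[\overline{S}]$ and settle the paper's Conjecture 1.1 for all $t\geqslant 4$, which the authors explicitly leave open; that is strong evidence the disjointness cannot be ``ruled out'' by routine degree bookkeeping. The paper instead proves only the edge inequality, by charging each vertex $x\in V(H_{vw})$ (which satisfies $d_G(x)\geqslant p+q-2$) with at least one extra adjacency, collecting these into $H'$ together with the external neighbourhood set $R$, and establishing $e(Q)+e(G[V(Q),R])\geqslant v(Q)$ component by component in $H'$; the tree components are where the saturation property must be invoked through a long case analysis. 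Your mixed collision/all-distinct count faces the same double-charging problems (a completion vertex of one component lying inside another), and the ``iterate after removing a confirmed $K_{q+1}$'' idea is not available, since Lemma \ref{re} applies to $G$ itself and not to a reduced graph. To repair the argument you should weaken the target to the pure edge count and carry out an explicit, overlap-aware charging scheme --- at which point you will have reconstructed something very close to the paper's analysis of $H'$.
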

\begin{proof}[\rm{\textbf{Proof}.}]
By Lemma \ref{saturated}, we know $e(G)\leqslant H(n;p,q,\cdots,q)$. Let $G$ be an extremal graph for $K_p\cup (t-1)K_q$ of order $n>q(q+1)(t-1)+3(p-2)$. By Lemma \ref{re} (i), there is a subgraph $H_{vw}= H_1\cup \cdots \cup H_{t-1}$ with $H_i\cong K_q$ ($i=1,\cdots,t-1$) in $G$.

For any vertex $x\in V(H_i)$, we claim that $d_G(x)\geqslant p+q-2$. By Lemma \ref{detla} (ii), we have $S\subseteq N_G(x)$. For some vertex $x'\in (V(H_i)\setminus \{x\})$, by Lemma \ref{re} (i), we have $H_{vx'}\cong (t-1)K_q$ in $G$. By Lemma \ref{re} (ii), we have $x \in V(H_{vx'})$, and then $x$ is a clique of order $q$ in $V(H_{vx'})$. Combining the fact $S\cup \{x'\}\subseteq N_G(x)$. Hence, $d_G(x)\geqslant p-2+1+q-1=p+q-2$ for any vertex $x\in V(H_{vw})$.

Let $H':=G[V(H_{vw})]\setminus E(H_{vw})$ and
$R:=\big\{u\, |\, u\in \overline{S}\setminus V(H_{vw})\ \text{and}\ u\ \text{has at least one neighbor}$ $\text{in $V(H_{vw})$}\big\}.$
Next, we will show that $e(H')+e(G[V(H'),R])\geqslant v(H')$. Let $Q$ be a component of $H'$.

Suppose $Q$ contains a cycle. Then $e(Q)\geqslant v(Q)$. Hence,
\begin{equation}\label{e2}
e(Q)+e(G[V(Q),R])\geqslant v(Q).
\end{equation}

Suppose $Q$ is an isolated vertex, say $y$, and $y\in V(H_i)$. The facts $d_G(y)\geqslant p+q-2$ and $|S\cup V(H_i)\setminus \{y\}|=p+q-3$ ensure that $e(G[\{y\},R])\geqslant 1=v(Q)$. Hence, in this case, we have
\begin{equation}\label{e1}
e(Q)+e(G[V(Q),R])\geqslant v(Q).
\end{equation}

Now suppose $Q$ is a tree with at least two vertices. We may prove that some vertex of $Q$ has a neighbor in $R$.

Suppose to the contrary that $e(G[V(Q),R])=0$. Let $u$ be an end vertex of a longest path of $Q$. Without loss of generality, assume that $u\in V(H_1)$. Suppose that $z$ is the neighbor of $u$ in $Q$. Then $z\in V(H_i)$ and $i\geqslant 2$. Without loss of generality, assume that $z\in V(H_2)$. Write $V(H_1)=\{u,u_2,\cdots,u_q\}$. Since $u$ is a pendant vertex of $Q$ and $e(G[V(Q),R])=0$, we have $N_G[u]=S\cup V(H_1)\cup \{z\}$.

We claim that the vertex $z$ is adjacent to all vertices of $V(H_1)$. Without loss of generality, suppose to the contrary that $z\nsim u_2$. Considering the graph $G+vu_i$ for some $i\neq 2$, there is a subgraph $H_{vu_i}$ isomorphic to $(t-1)K_q$ in $G$ by Lemma \ref{re} (i).

 By Lemma \ref{re} (ii), we have $u\in V(H_{vu_i})$, namely $u$ is in a clique of $H_{vu_i}$. Noting that  $N_G[u]\setminus(S\cup \{u_i\})=(V(H_1)\setminus \{u_i\})\cup\{z\}$, then the vertex $u$ is in a clique of $H_{vu_i}$ induced by $(V(H_1)\setminus \{u_i\})\cup\{z\}$. But these vertices cannot be a clique since $z\nsim u_2$

So $V(H_1)\subseteq N_G(z)$. Since the pendant edge $uz$ lies on a longest path of $Q$, all but one vertex in $N_G(v)$ are the pendant vertices of $Q$. Without loss of generality, assume that $\{u,u_2,\cdots,u_{q-1}\}$ are pendant vertices of $Q$. Using the assumption $e(G[V(Q),R])=0$, we have $N_G[u_i]=S\cup V(H_1)\cup\{z\}$ for $2\leqslant i\leqslant q-1$.

Note that $N_G[u]=S\cup V(H_1)\cup\{z\}$. For a vertex $z'\in V(H_2)\setminus\{z\}$, we have $u\nsim z'$, and $|N_G(u)\cap N_G(z')|\leqslant p$. If $|N_G(u)\cap N_G(z')|= p$, then $N_G(u)\cap N_G(z')= S\cup \{u_q,z\}$. The graph $G_3=G+uz'$ contains a subgraph $H_{uz'}^*$ isomorphic to $K_p\cup (t-1)K_q$. Obviously, the new edge $uz'$ lies in $H_{uz'}^*$. Let $W$ be the maximal clique containing the edge $uz'$ in $H_{uz'}^*$. Then $v(W)=p$ or $q$.

The fact $|N_G(u)\cap N_G(z')|\leqslant p$ implies that $v(W)\leqslant p+2$. Now we distinguish three cases to obtain contradictions by finding a subgraph $K_p\cup (t-1)K_q$ in $G$. Let $W'=H_{uz'}^*- V(W)$.

\textbf{Case 1.} $v(W)=p+2$.

In this case, $W=G_3[S\cup\{u,z',z,u_q\}]$, and then $v(W)=p+2=q$.

If $u_2\notin V(W')$, then $G[S\cup\{u,u_2,z,u_q\}]\cup W'$ is a subgraph $K_p\cup (t-1)K_q$ in $G$. So $u_2$ is in $W'$. Note that $N_G[u_2]\setminus (S\cup\{u,z,u_q\})=\{u_2,\cdots,u_{q-1}\}$. Hence, $u_2$ is in a clique $K_p$ of $W'$, and this clique is $G[\{u_2,\cdots,u_{q-1}\}]$. Therefore, $G[S\cup\{z,z'\}]\cup G[\{u,u_2,\cdots,u_q\}]\cup (W'-\{u_2,\cdots,u_{q-1}\})$ is a subgraph $K_p\cup (t-1)K_q$ in $G$.

\textbf{Case 2.} $v(W)=p+1$.

Noting that $V(W)\setminus \{u,z'\}\subseteq S\cup \{z,u_q\}$, we distinguish two subcases according to the vertices in $V(W)\setminus\{u,z'\}$.

\textbf{Subcase 2.1.} $V(W)\setminus\{u,z'\}=S\cup\{z\}$ or $V(W)\setminus\{u,z'\}=S\cup\{u_q\}.$

Without loss of generality, assume that $V(W)\setminus\{u,z'\}=S\cup\{z\}$. Then $W=G_3[S\cup \{u,z',z\}]$, and then $v(W)=p+1=q$.

If $u_2\notin V(W')$, then $G[S\cup\{u,u_2,z\}]\cup W'$ is a subgraph $K_p\cup (t-1)K_q$ in $G$. So $u_2$ is in $W'$.
Note that $N_G[u_2]\setminus (S\cup\{u,z\})=\{u_2,\cdots,u_q\}$. Hence, $u_2$ is in a clique $K_p$ of $W'$, and this clique is $G[\{u_2,\cdots,u_q\}]$. Therefore, $G[S\cup\{z,z'\}]\cup G[\{u,u_2,\cdots,u_q\}]\cup (W'-\{u_2,\cdots,u_q\})$ is a subgraph $K_p\cup (t-1)K_q$ in $G$.

\textbf{Subcase 2.2.} $V(W)\setminus\{u,z'\}=S'\cup\{z,u_q\}$ with $S'\subseteq S$ and $|S'|=p-3$ when $p\geqslant 3$.

Then $W=G_3[S'\cup\{u,z',z,u_q\}]$, and then $v(W)=p+1=q$.

If $u_2\notin V(W')$, then $G[S'\cup\{u,u_2,z,u_q\}]\cup W'$ is a subgraph $K_p\cup (t-1)K_q$ in $G$. So $u_2$ is in $W'$. Note that $N_G[u_2]\setminus (S\cup\{u,z,u_q\})=\{u_2,\cdots,u_{q-1}\}$. Write $\{s\}=S\setminus S'$. Hence, $u_2$ is in a clique $K_p$ of $W'$, and this clique is $G[\{u_2\cdots,u_{q-1},s\}]$. Therefore, $G[S\cup\{z,z'\}]\cup G[\{u,u_2,\cdots,u_q\}]\cup (W'-\{u_2,\cdots,u_{q-1},s\})$ is a subgraph $K_p\cup (t-1)K_q$ in $G$.

\textbf{Case 3.} $v(W)=p$.

Noting that $V(W)\setminus\{u,z'\} \subseteq S\cup \{z,u_q\}$, we distinguish three subcases according to the vertices in $V(W)\setminus\{u,z'\}$.

\textbf{Subcase 3.1.} $V(W)\setminus\{u,z'\}=S$.

In this subcase, $W=G_3[S\cup \{u,z'\}]\cong K_p$.

If $u_2\notin V(W')$, then $G[S\cup\{u,u_2\}]\cup W'$ is a subgraph $K_p\cup (t-1)K_q$ in $G$. So $u_2$ is in $W'$. Note that $N_G[u_2]\setminus (S\cup \{u\})=\{u_2,\cdots,u_q,z\}$. Hence, $u_2$ is in a clique $K_q$ of $W'$, and this clique is $G[\{u_2,\cdots,u_q,z\}]$. Thus $G[S\cup\{z,z'\}]\cup G[\{u,u_2,\cdots,u_q\}]\cup (W'-\{u_2,\cdots,u_q,z\})$ is a subgraph $K_p\cup (t-1)K_q$ in $G$.

\textbf{Subcase 3.2.} $V(W)\setminus\{u,z'\}=S'\cup\{z\}$ or  $V(W)\setminus\{u,z'\}=S'\cup\{u_q\}$ with $S'\subseteq S$ and $|S'|=p-3$ when $p\geqslant 3$.

Without loss of generality, assume that $V(W)\setminus\{u,z'\}=S'\cup\{z\}$. Then $W=G[S'\cup \{u,z',z\}]\cong K_p$.

If $u_2\notin V(W')$, then $G[S'\cup\{u,u_2,z\}]\cup W'$ is a subgraph $K_p\cup (t-1)K_q$ in $G$. So $u_2$ is in $W'$.
Write $\{s\}=S\setminus S'$. Note that $N_G[u_2]\setminus (S'\cup \{u,z\})=\{u_2,\cdots,u_q,s\}$. Hence, $u_2$ is in a clique $K_q$ of $W'$, and this clique is $G[\{u_2,\cdots,u_q,s\}]$. Thus $G[S\cup\{z,z'\}]\cup G[\{u,u_2,\cdots,u_q\}]\cup (W'-\{u_2,\cdots,u_q,s\})$ is a subgraph $K_p\cup (t-1)K_q$ in $G$.

\textbf{Subcase 3.3.} $V(W)\setminus\{u,z'\}=S''\cup\{z,u_q\}$ with $S''\subseteq S$ and $|S''|=p-4$ when $p\geqslant 4$.

We have $W=G[S''\cup \{u,z',z,u_q\}]\cong K_p$.

If $u_2\notin V(W')$, then $G[S''\cup \{u,u_2,z,u_q\}]\cup W'$ is a subgraph $K_p\cup (t-1)K_q$ in $G$. So $u_2$ is in $W'$. Write $\{s_1,s_2\}=S\setminus S''$. Note that $N_G[u_2]\setminus (S''\cup \{u,z,u_q\})=\{u_2,\cdots,$ $u_{q-1},s_1,s_2\}$. Hence, $u_2$ is in a clique $K_q$ of $W'$, and this clique is $G[\{u_2,\cdots,$ $u_{q-1},s_1,s_2\}]$. Thus $G[S\cup\{z,z'\}]\cup G[\{u,u_2,\cdots,u_q\}]\cup (W'- \{u_2,\cdots,u_{q-1},s_1,s_2\})$ is a subgraph $K_p\cup (t-1)K_q$ in $G$.

The above cases show that $e(G[V(Q),R])\geqslant 1$ for a tree $Q$, and then
\begin{equation}\label{e3}
e(Q)+e(G[V(Q),R])\geqslant v(Q).
\end{equation}
Using equations (\ref{e2}), (\ref{e1}), and (\ref{e3}), we have
$e(H')+e(G[V(H'),R])\geqslant v(H')=v(H)$. Furthermore,
\begin{align*}
e(G[\overline{S}])&\geqslant e(H_{vw})+e(H')+e(G[V(H),R])\\
&\geqslant e(H_{vw})+v(H')\\
&= (t-1)\binom{q}{2}+(t-1)q\\
&=(t-1)\binom{q+1}{2}.
\end{align*}
By Lemma \ref{detla} (iii), we have
$e(G[\overline{S}])\leqslant(t-1)\binom{q+1}{2}$.
Hence,
$e(G[\overline{S}])=(t-1)\binom{q+1}{2},$
and then
\begin{align*}
e(G)=e(G[S])+e(G[S,\overline{S}])+e(G[\overline{S}])=\binom{p-2}{2}+(p-2)(n-p+2)+(t-1)\binom{q+1}{2}.
\end{align*}
This completes the proof.
\end{proof}

\begin{theorem}\label{p2q}
Suppose $2\leqslant p< q$ and $n> 2q(q+1)+3(p-2)$. Then $H(n;p,q,q)$ is the unique extremal graph for $K_p\cup 2K_q$.
\end{theorem}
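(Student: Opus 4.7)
The plan is to use Theorem~\ref{ptq} and Lemma~\ref{detla} to pin the skeleton of any extremal $G$ and then extract rigidity from the saturation hypothesis. Let $G$ be extremal for $K_p\cup 2K_q$ and let $v$ be a vertex of minimum degree. By Lemma~\ref{detla}, $\delta(G)=p-2$, $S:=N(v)$ induces $K_{p-2}$, every vertex of $\overline{S}$ is adjacent to all of $S$, and hence $G=K_{p-2}\vee G'$ where $G':=G[\overline{S}]$. The proof of Theorem~\ref{ptq} yields $e(G')=2\binom{q+1}{2}$, that $G[\overline{S}\setminus V(H_{vw})]$ is edgeless for any fixed $2K_q$-subgraph $H_{vw}=H_1\cup H_2$ of $G'$, and the equality $e(H')+e(G[V(H_{vw}),R])=2q$ in the notation of that proof. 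Thus $G\cong H(n;p,q,q)$ reduces to showing $G'\cong K_{q+1}\cup K_{q+1}\cup I_{n-p-2q}$. The crucial reformulation is that, because $G$ is $K_p\cup 2K_q$-free and $G=K_{p-2}\vee G'$, $G'$ cannot contain a $2K_q$ together with an edge disjoint from both of its $K_q$-components.

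The structural claim is proved in three steps. \textbf{(a)} $H'$ is edgeless. If $uw\in E(H')$ with $u\in V(H_1)$, $w\in V(H_2)$, saturation applied to the non-edge $vu$ forces the unique $K_p=S\cup\{v,u\}$ (since $N_G(v)=S$) and a $2K_q$ inside $G'-\{v,u\}$; the independence of $R\cup I_0=\overline{S}\setminus V(H_{vw})$ in $G'$ forces this $2K_q$ to split between the $V(H_1)$- and $V(H_2)$-sides, producing in particular a $z\in R$ adjacent to all of $V(H_1)\setminus\{u\}$. Symmetrically, saturation on $vw$ yields $z'\in R$ adjacent to $V(H_2)\setminus\{w\}$. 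If $z\neq z'$, then $(V(H_1)\setminus\{u\})\cup\{z\}$, $(V(H_2)\setminus\{w\})\cup\{z'\}$, together with the edge $uw$, form a $2K_q$-plus-edge configuration, contradicting the reformulation. If $z=z'$, the equality pins $u$'s unique $R$-neighbour to $z$, and testing the non-edge $uw_j$ for any $w_j\in V(H_2)\setminus\{w\}$ rules out saturation: every admissible $K_p$ containing $uw_j$ lies in $\{u,w_j\}\cup S\cup\{z,w\}$, yet none of these $K_p$'s leaves a disjoint $2K_q$ in $G$. \textbf{(b)} Each $V(H_i)$ shares a common unique $R$-neighbour $r_i$. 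Given (a), every vertex of $V(H_{vw})$ has exactly one $R$-neighbour; if $u_1,u_2\in V(H_1)$ had distinct $R$-neighbours $y_1,y_2$, the non-edge $y_1u_2$ gives the same kind of contradiction, since $N_G(y_1)\cap N_G(u_2)=S\cup\{u_i:y_i=y_1\}$ (cross-adjacencies have been killed by (a)) and none of the resulting $K_p$-candidates leaves a $2K_q$ in its complement. \textbf{(c)} $r_1\neq r_2$. If $r_1=r_2=r$, then $G'$ is the shared-vertex book on $V(H_{vw})\cup\{r\}$ plus isolated vertices, and testing $G+u_1w_1$ for $u_1\in V(H_1)$, $w_1\in V(H_2)$ shows that every admissible $K_p$ lies in $S\cup\{u_1,w_1,r\}$ while the two natural $K_q$-completions of $V(H_1)\setminus\{u_1\}$ and $V(H_2)\setminus\{w_1\}$ both require $r$, so they cannot be chosen disjointly, again contradicting saturation. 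Combining (a)--(c) gives $V(H_i)\cup\{r_i\}\cong K_{q+1}$ for $i=1,2$, so together with the edgeless complement $G'\cong K_{q+1}\cup K_{q+1}\cup I_{n-p-2q}$.

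The main obstacle is the case analysis hidden inside (a)--(c): for each problematic non-edge $e$, one must list every $K_p$ of $G+e$ containing $e$---a finite list thanks to the small common neighbourhoods imposed by the preceding structural reductions---and verify that no disjoint $2K_q$ survives in the complement. The hypothesis $n>2q(q+1)+3(p-2)$ supplies abundant isolated slack in $I_0$, so the difficulty is bookkeeping of cliques across the case splits rather than any shortage of vertices.
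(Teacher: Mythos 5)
Your overall frame (reduce to the structure of $G'=G[\overline{S}]$, use the equalities coming from the proof of Theorem~\ref{ptq}, then prove no cross edges, a unique common $R$-neighbour per side, and distinctness of the two $R$-neighbours) matches the paper's target, but your route to (a)--(c) replaces the paper's key counting lemma with ad hoc saturation checks, and two of those checks have genuine gaps.

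First, step (a) is circular. Your deduction that the $2K_q$ in $G-(S\cup\{v,u\})$ ``splits between the sides'' and therefore produces a $z\in R$ adjacent to \emph{all} of $V(H_1)\setminus\{u\}$ relies on each vertex of $V(H_{vw})$ having exactly one neighbour outside $S\cup V(H_i)$; that uniqueness follows from the budget $e(H')+e(G[V(H_{vw}),R])=2q$ only \emph{after} you know $e(H')=0$, which is exactly what (a) is trying to prove. (If $e(H')=h\geqslant 1$ the extra-degree sum is $2q+h$, so up to $h$ vertices can have two extra neighbours and cliques of $H_{vu}$ may legitimately mix the two sides.) The phrase ``the equality pins $u$'s unique $R$-neighbour to $z$'' is also off: under your hypothesis $u$'s extra neighbour is $w\in V(H_2)$, not a vertex of $R$.

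Second, and more seriously, in every saturation test you only consider the new edge landing in the $K_p$-component (``every admissible $K_p$ containing $uw_j$ lies in\ldots''). The new edge can instead lie in a $K_q$-component, and in step (b) this kills the argument: if $u_1,u_2\in V(H_1)$ have distinct $R$-neighbours $y_1\neq y_2$ and $y_1$ has at least $q-p$ neighbours in $V(H_1)$, then $S\cup\{y_1,u_2\}\cup T$ with $T\subseteq N(y_1)\cap V(H_1)$, $|T|=q-p$, is a $q$-clique of $G+y_1u_2$, which together with $V(H_2)$ and a $K_p$ in the leftover of $V(H_1)\cup R$ can form a genuine $K_p\cup 2K_q$; no contradiction results, so configurations with $|R|>2$ are not excluded by your argument. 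The paper closes exactly this hole before any such case analysis: by Lemma~\ref{re}(ii), every $x\in R$ with a neighbour $y\in V(H_{vw})$ must be absorbed into a $q$-clique of $H_{vy}$, whence $d_{H_{vw}}(x)\geqslant q$; combined with $|R|\geqslant 2$ (a vertex count in $G+u_1v_1$) and the budget $2q$ this forces $|R|=2$, $d_{H_{vw}}(x)=q$, $e(H')=0$ and $R$ independent in one stroke. You need this lemma (or an equivalent lower bound on the $H_{vw}$-degree of $R$-vertices) before steps (a)--(c) can be carried out; with it, most of your case analysis collapses into the paper's shorter endgame.
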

\begin{proof}[\rm{\textbf{Proof}.}]
Let $G$ be an extremal graph for $K_p\cup 2K_q$ of order $n> 2q(q+1)+3(p-2)$. By Lemma \ref{re} (i), there is a subgraph $H_{vw}=H_{vw,1}\cup H_{vw,2}$ with $H_{vw,1}\cong H_{vw,2}\cong K_q$ in $G$. Write $V(H_{vw,1})=\{u_1,\cdots,u_q\}$ and $V(H_{vw,2})=\{v_1,\cdots,v_q\}$. By Theorem \ref{ptq}, we have
\begin{equation}\label{2q}
e(G[\overline{S}])-e(H_{vw})= 2\binom{q+1}{2}-2\binom{q}{2}=2q.
\end{equation}

We claim that $e(G[\overline{S}\setminus V(H_{vw})])=0$. Otherwise, there is an edge, say $e=xy$, in $E(G[\overline{S}\setminus V(H_{vw})])$. Then $G[S\cup\{x,y\}]\cup H_{vw}\cong K_p\cup 2K_q$, which is a contradiction. Hence, for any $x\in V(G)$, $N_G(x)\subseteq S\cup V(H_{vw})$. Let
$$R:=\big\{u\, |\, u\in \overline{S}\setminus V(H_{vw})\ \text{and}\ u \ \text{has at least one neighbor \text{in} $V(H_{vw})$}\big\}.$$
Recall that $S\subseteq N_G(x)$ for any vertex $x$ of $G$. Then, in $G$, the vertices of degree at least $p-1$ are in $S\cup V(H_{vw})\cup R$.

Now we will show that $|R|\geqslant2$.
There is some vertex $u_i$ is not adjacent to $v_j$ for $1\leqslant i,j\leqslant q$. Otherwise, we have $e(G[\overline{S}])-e(H_{vw})\geqslant q^2>2q$,
which is a contradiction to (\ref{2q}). Without loss of generality, assume that $u_1\nsim v_1$. Then the graph $G+u_1v_1$ contains $K_p\cup 2K_q$ as a subgraph. If $|R|\leqslant 1$, then
$|S\cup V(H_{vw})\cup R|\leqslant p-2+2q+1<p+2q.$
There is not a subgraph $K_p\cup 2K_q$ in $G+u_1v_1$. So $|R|\geqslant 2$ holds.

We claim that $d_{H_{vw}}(x)=q$ for any vertex $x\in R$. Let $y$ be a neighbor of $x$ in $V(H_{vw})$ for some vertex $x\in R$. By Lemma \ref{re} (i), there is a subgraph $H_{vy}$ isomorphic to $2K_q$ in $G$. By Lemma \ref{re} (ii), we have $x\in V(H_{vy})$, and then $x$ is a clique of order $q$ in $V(H_{vy})$. Recall that $N_G(x)\subseteq S\cup V(H_{vw})$. Noting that $y\in N_G(x)$ and $y\notin V(H_{vy})$, we have $d_{H_{vw}}(x)\geqslant q$ for any vertex $x\in R$.
Using equation (\ref{2q}), we have
\begin{equation}\label{3}
|R|q\leqslant\sum_{x\in R}d_{H_{vw}}(x)= e(G[R,V(H_{vw})])\leqslant e(G[\overline{S}])-e(H_{vw})= 2q.
\end{equation}
Then $|R|=2$. And (\ref{3}) become equalities. We have $d_{H_{vw}}(x)=q$, and then $d_G(x)=p+q-2$ for any vertex $x\in R$. Furthermore, we have $e(G[\overline{S}])- e(H_{vw})=e(G[R,V(H_{vw})])$. So $u_i\nsim v_j$ for any $1\leqslant i,j\leqslant q$.

Let $R=\{x_1,x_2\}$, and then $x_1\nsim x_2$ since $E(G[\overline{S}])= E(H_{vw})\cup E(G[R,V(H_{vw})])$. Without loss of generality, assume that $u_1$ is a neighbor of $x_1$ in $H_{vw}$. Considering the graph $G+vu_1$, there is a subgraph $H_{vu_1}$ isomorphic to $2K_q$ in $G$.
By Lemma \ref{re} (ii), we have $x_1\in V(H_{vu_1})$, namely $x_1$ is in a clique of order $q$ in $H_{vu_1}$ .

Note that $|N_G(x_1)\setminus (S\cup \{u_1\})|=q-1$, and then vertices in $N_G(x_1)\setminus (S\cup \{u_1\})$ induce a clique of order $q-1$. Since $N_G(x_1)\setminus(S\cup \{u_1\})\subseteq V(H_{vw})$, we have $N_G(x_1)\setminus(S\cup \{u_1\})\subseteq V(H_{vw,1})$ or $N_G(x_1)\setminus(S\cup \{u_1\})\subseteq V(H_{vw,2})$.

We will show that $N_G(x_1)\setminus(S\cup \{u_1\})\subseteq V(H_{vw,1})$. Suppose to the contrary that $N_G(x_1)\setminus(S\cup \{u_1\})\subseteq V(H_{vw,2})$. Then we may assume that $x_1\sim v_i$ ($i=1,\cdots,q-1$), and then $N_G(x_1)=S\cup\{u_1,v_1,\cdots,v_{q-1}\}$. By considering the auxiliary graph $G+vv_1$, the vertex $x_1$ is in a clique of order $q$ in $H_{vv_1}$. Furthermore, this clique is induced by $\{u_1,v_2,\cdots,v_{q-1},x_1\}$, which is a contradiction to the fact $u_1\nsim v_i$ for any $1\leqslant i\leqslant q$. Hence, $N_G(x_1)\setminus(S\cup \{u_1\})\subseteq V(H_{vw,1})$.

Since $d_G(x_1)=p+q-2$, we have $N_G(x_1)=S\cup V(H_{vw,1})$. Similarly, we may prove $N_G(x_2)=S\cup V(H_{vw,i})$ for some $1\leqslant i\leqslant2$.

We claim that $N_G(x_2)=S\cup V(H_{vw,2})$. Otherwise, suppose to the contrary that $N_G(x_2)=S\cup V(H_{vw,1})$. Then $v_2$ has no neighbor in $R$. So $N_G[v_2]=S\cup V(H_{vw,2})$, and then $|N_G[v_2]\setminus(S\cup\{v_1\})|=|V(H_{vw,2})\setminus\{v_1\}|= q-1$. By Lemma \ref{re} (ii), we have $v_2\in V(H_{vv_1})$. Then we have $|N_G[v_2]\setminus(S\cup\{v_1\})|\geqslant q$, which is a contradiction. Thus, $N_G(x_1)=S\cup V(H_{vw,1})$ and $N_G(x_2)=S\cup V(H_{vw,2})$. By Lemma \ref{detla} (ii),
we know $S\subseteq N_G(w)$ for any $w\in \overline{N(v)}$, and then
\begin{align*}
G&=G[S]\vee\left(G[V(H_{vw,1})\cup\{x_1\}]\cup G[V(H_{vw,2})\cup\{x_2\}]\cup I_{n-p-2q}\right)\\
&\cong K_{p-2}\vee(K_{q+1}\cup K_{q+1}\cup I_{n-p-2q})
\end{align*}
Hence, $G\cong H(n;p,q,q)$.
\end{proof}

\section{The saturation number for $K_p\cup K_q\cup K_r$ ($r\geqslant p+q$)}
The saturation number and the extremal graph for $K_p\cup 2K_q$ have been determined in Section 3, and then we always suppose that $2\leqslant p\leqslant q\leqslant r-1$ in this section. In Lemma \ref{7}, we will prove that $H(n;p,q,r)$ is $K_p\cup K_q\cup K_r$-saturated when $r\geqslant p+q$. We may point out that the condition $r\geqslant p+q$ is necessary in Lemma \ref{7}. In fact, when $2\leqslant p\leqslant q\leqslant r-1$ and $r\leqslant p+q-1$, $H(n;p,q,r)$ contains a subgraph $K_p\cup K_q\cup K_r$. When $p=2$, and then $r=q+1$, noting that $H(n;2,q,q+1)\cong K_{q+1}\cup K_{q+2}\cup I_{n-2q-3}$. We may find a $K_2\cup K_q$ in $K_{q+2}$. Hence, $K_{q+1}\cup K_{q+2}$ contains a subgraph $K_2\cup K_q\cup K_{q+1}$. When $p\geqslant 3$ and $r\leqslant p+q-1$, we may write $K_{p-2}$ as $K_{p+q-r-1}\vee K_{r-q-1}$. Then the graph $K_{p+q-r-1}\vee K_{r+1}$ contains a subgraph $K_p\cup K_q$, and $K_{r-q-1}\vee K_{q+1}=K_r$ holds. Hence, $K_{p-2}\vee (K_{q+1}\cup K_{r+1})$ contains a subgraph $K_p\cup K_q\cup K_r$.

\begin{lemma}\label{7}
$H(n;p,q,r)$ is $K_p\cup K_q\cup K_r$-saturated when $r\geqslant p+q$.
\end{lemma}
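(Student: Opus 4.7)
Write $G = H(n;p,q,r) = B \vee (Q \cup R \cup I)$, where $B = K_{p-2}$, $Q = K_{q+1}$, $R = K_{r+1}$ and $I = I_{n-p-q-r}$. The statement has two halves: (a) $G$ contains no $K_p \cup K_q \cup K_r$, and (b) for every non-edge $uv$, the graph $G + uv$ does contain a copy.

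For (a), the plan is to observe that every clique of $G$ lies inside $V(B) \cup V(Q)$ or $V(B) \cup V(R)$, since $V(Q)$ and $V(R)$ are non-adjacent. Suppose for contradiction that disjoint cliques $A_p, A_q, A_r$ of orders $p, q, r$ exist. Because $r \geq p+q$ and $|V(B) \cup V(Q)| = p+q-1 < r$, the clique $A_r$ must be contained in $V(B) \cup V(R)$. Writing $a = |A_r \cap V(B)|$, so that $A_r$ uses $r-a$ vertices of $R$ and leaves only $a+1$ vertices of $R$ and $p-2-a$ of $B$ unused, I would then run through the possibilities for where $A_p$ and $A_q$ sit. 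The cases ``both in $V(B) \cup V(R)$'' and ``both in $V(B) \cup V(Q)$'' die by a trivial count ($p-1 < p+q$ and $p+q-1-a < p+q$ respectively). The two mixed cases each reduce, after intersecting the cliques with $V(B), V(Q), V(R)$ and using $p \leq q$, to the impossible inequality $\alpha \leq p-q-1 < 0$ for some nonnegative $\alpha$. This exhausts the cases.

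For (b), I enumerate the four types of non-edges of $G$: inside $I$, between $Q$ and $R$, between $Q$ and $I$, and between $R$ and $I$. The key observation is uniform: since $B$ is joined to each of $Q$, $R$, $I$, for any non-edge $uv$ the set $V(B) \cup \{u,v\}$ induces a $K_p$ in $G + uv$. After removing these $p$ vertices, the remaining graph still contains a $K_{q+1}$ (from $Q$, possibly minus $u$) disjoint from a $K_{r+1}$ (from $R$, possibly minus $v$), from which one reads off the required $K_q$ and $K_r$. One only needs to check case by case that the two endpoints of the new edge can indeed be taken so that the leftover still provides the larger clique on the correct side; for instance, in the $Q$--$R$ case one takes $u \in Q, v \in R$ and then $Q \setminus \{u\} \cong K_q$ and $R \setminus \{v\} \cong K_r$ are already present.

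There is no real obstacle here; the argument is a structural clique-packing count plus a case enumeration over non-edges. The only step worth being careful with is the case analysis in (a), because one must make sure to use the hypothesis $r \geq p+q$ (which forces $A_r$ into the $B \cup R$ side) and also $p \leq q$ (which kills the two mixed placements of $A_p, A_q$); this is exactly where the condition $r \geq p+q$ becomes necessary, matching the paper's remark preceding the lemma.
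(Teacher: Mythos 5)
Your proposal is correct and follows essentially the same route as the paper: both halves hinge on the observation that $r\geqslant p+q$ forces the $K_r$ into $K_{p-2}\vee K_{r+1}$, after which a counting argument (your explicit case split on where $A_p$ and $A_q$ sit, versus the paper's single leftover vertex of $K_{r+1}$ that would have to be adjacent to $K_{q+1}$) gives the contradiction, and the saturation half is the same routine check that $V(B)\cup\{u,v\}$ together with the untouched parts of $K_{q+1}\cup K_{r+1}$ supplies the required cliques. The only cosmetic point is that your opening claim that every clique lies in $V(B)\cup V(Q)$ or $V(B)\cup V(R)$ should be restricted to cliques of order at least $p$, since a clique meeting the independent set lies in $V(B)\cup\{i\}$ and has order at most $p-1$.
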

\begin{proof}[\rm{\textbf{Proof}.}]
We may prove that $H(n;p,q,r)$ is $K_p\cup K_q\cup K_r$-free. Suppose to the contrary that there is a subgraph $K_p\cup K_q\cup K_r$ in $H(n;p,q,r)$. Since $p+q-1<r$, the graph $K_{p-2}\vee K_{q+1}$ does not contain a subgraph $K_r$. Hence, $K_r$ is in the graph $K_{p-2}\vee K_{r+1}$. Let $R$ be the set of the vertices of $K_r$. Since there are $p-1$ vertices in $V(K_{p-2}\vee K_{r+1})\setminus R$, there are $p+q$ vertices in $V(K_{p-2}\vee(K_{q+1}\cup K_{r+1}))\setminus R$, and then these vertices induce a $K_p\cup K_q$. Note that the vertices of $K_{r+1}$ and the vertices of $K_{q+1}$ are non-adjacent. Hence, there is at least one vertex in $V(K_{p-2}\vee K_{r+1})\setminus R$ which is not adjacent to the vertices in $K_{q+1}$. This is a contradiction. Hence, $H(n;p,q,r)$ is $K_p\cup K_q\cup K_r$-free when $r\geqslant p+q$. After adding an edge in $H(n;p,q,r)$, there is a subgraph $K_p\cup K_q\cup K_r$. Hence, $H(n;p,q,r)$ is $K_p\cup K_q\cup K_r$-saturated when $r\geqslant p+q$.
\end{proof}

In this section, we will further prove that $H(n;p,q,r)$ is the extremal graph for $K_p\cup K_q\cup K_r$ when $r\geqslant p+q$.

\begin{theorem}\label{pqr}
Suppose $2\leqslant p\leqslant q$, $r\geqslant p+q$, and $n>3(p-2)+q(q+1)+r(r+1)$. Then
$$sat(n,K_p\cup K_q\cup K_r)=(p-2)(n-p+2)+\binom{p-2}{2}+\binom{q+1}{2}+\binom{r+1}{2}.$$
Furthermore, $H(n;p,q,r)$ is the unique extremal graph for $K_p\cup K_q\cup K_r$.
\end{theorem}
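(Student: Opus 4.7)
The plan is to follow the two-phase template already established in Theorems~\ref{ptq} and~\ref{p2q}. By Lemma~\ref{7} we already have $sat(n,K_p\cup K_q\cup K_r)\leq e(H(n;p,q,r))$, so I take an extremal $K_p\cup K_q\cup K_r$-saturated graph $G$ of order $n$ with $e(G)\leq e(H(n;p,q,r))$, pick a vertex $v$ of minimum degree, and invoke Lemma~\ref{detla} to obtain $d(v)=p-2$, $S:=N(v)\subseteq N(w)$ for every $w\in\overline{S}$, and the cap $e(G[\overline{S}])\leq\binom{q+1}{2}+\binom{r+1}{2}$. Lemma~\ref{re}(i) then supplies $H_{vw}=H_1\cup H_2$ with $H_1\cong K_q$ and $H_2\cong K_r$.

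For the saturation number (the matching lower bound), I would rerun the component analysis from Theorem~\ref{ptq}. First, for any $x\in V(H_1)\cup V(H_2)$, applying Lemma~\ref{re}(i)(ii) to $G+vx'$ with $x'$ in the same component of $H_{vw}$ forces $x$ into a $K_q$ or $K_r$ of $H_{vx'}$; combined with $S\subseteq N(x)$ this gives $d_G(x)\geq p+q-2$. I would then set $H':=G[V(H_{vw})]\setminus E(H_{vw})$ and $R:=\{u\in\overline{S}\setminus V(H_{vw}):N(u)\cap V(H_{vw})\neq\emptyset\}$, and prove $e(H')+e(G[V(H'),R])\geq v(H')=q+r$ by analyzing each component of $H'$ separately. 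Cyclic and isolated components pass through essentially as in Theorem~\ref{ptq}; for tree components I would repeat the pendant-vertex argument, showing that the cross-neighbor $z\in V(H_{3-i})$ of a pendant $u\in V(H_i)$ must be joined to all of $V(H_i)$, then deriving a contradiction by considering $G+uz'$ for $z'\in V(H_{3-i})\setminus\{z\}$ through a case split on the size of the clique absorbing the new edge $uz'$. The hypothesis $r\geq p+q$ enters crucially here, both in forbidding the option that $u$ sits inside a $K_r$ of $H_{vu_j}$ (since the candidate set has only $q<r$ vertices) and in bounding $|V(W)|\leq|N_G(u)\cap N_G(z')|+2\leq p+q\leq r$ so that the rebuilt clique roles line up. Combined with Lemma~\ref{detla}(iii) this pins down $e(G[\overline{S}])=\binom{q+1}{2}+\binom{r+1}{2}$, hence the claimed saturation number.

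For uniqueness I would follow Theorem~\ref{p2q}. Any edge $xy$ inside $\overline{S}\setminus V(H_{vw})$ would yield $G[S\cup\{x,y\}]\cup H_{vw}\cong K_p\cup K_q\cup K_r$ in $G$, so $e(G[\overline{S}\setminus V(H_{vw})])=0$; consequently the $q+r$ surplus of $e(G[\overline{S}])$ over $e(H_{vw})$ lives entirely on edges between $R$ and $V(H_{vw})$. Using Lemma~\ref{re}(ii) on a neighbor $y\in V(H_{vw})$ of $x\in R$ and noting $y\notin V(H_{vy})$, each $x\in R$ must sit in a $K_q$ or $K_r$ of $H_{vy}$, giving $d_{H_{vw}}(x)\geq q$; the degree sum $|R|q\leq q+r$ then forces $|R|=2$ with one vertex attached to all of $V(H_1)$ (completing $H_1$ to $K_{q+1}$) and the other attached to all of $V(H_2)$ (completing $H_2$ to $K_{r+1}$). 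A short separate argument—considering $G+vy$ for a hypothetical cross neighbor and using the non-adjacency of $V(H_1)$ and $V(H_2)$ already forced by the tight edge count, exactly as in the $N_G(x_i)=S\cup V(H_{vw,i})$ step of Theorem~\ref{p2q}—rules out the configurations in which a vertex of $R$ straddles both components or both vertices of $R$ attach to the same component. Together with $S\subseteq N(w)$ for $w\in\overline{S}$ this yields $G\cong K_{p-2}\vee(K_{q+1}\cup K_{r+1}\cup I_{n-p-q-r-1})=H(n;p,q,r)$.

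I expect the main obstacle to be the tree-component subcase analysis inside the lower bound: unlike Theorem~\ref{ptq}, where both components of $H_{vw}$ were $K_q$, here the asymmetry in sizes $q$ and $r$ means the rebuilt $K_p\cup K_q\cup K_r$ used to derive each contradiction must match three cliques of pairwise distinct sizes against candidate vertex sets, and the condition $r\geq p+q$ must be invoked at several points to exclude the unwanted assignments that would otherwise produce a forbidden subgraph inside $G[S]\vee(\cdots)$.
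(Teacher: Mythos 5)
Your overall scaffolding (Lemmas~\ref{detla}, \ref{re}, \ref{7}) matches the paper, but your core strategy does not: the paper does \emph{not} transplant the component-counting argument of Theorem~\ref{ptq}. Instead it forms the auxiliary graph $H$ on $V(H_{vw})\cup V(H_{vu_1})$, introduces the overlap parameters $\ell_{ij}=|V(H_{vu_1,i})\cap V(H_{vw,j})|$, and through Lemmas~\ref{5}--\ref{q-1} proves $\ell_{12}=0$ and $\ell_{11}=q-1$; this directly produces a single vertex $x$ completing $V(H_{vw,1})$ to a $K_{q+1}$ and a single vertex $y$ completing $V(H_{vw,2})$ to a $K_{r+1}$, from which both the count and the uniqueness follow at once. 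There is a concrete reason the paper avoids your route, and it is a genuine gap in your proposal. The component analysis of Theorem~\ref{ptq} rests on the degree bound $d_G(x)\geqslant p+q-2$ being large enough that an isolated vertex of $H'$ must have a neighbor in $R$. For $x\in V(H_{vw,2})$ (the $K_r$ side), Lemma~\ref{re}(ii) applied to $G+vx'$ only tells you that $x$ lies in \emph{some} clique of $H_{vx'}$, which may be the $K_q$; so the best you get is $d_G(x)\geqslant (p-2)+1+(q-1)=p+q-2$. But an isolated vertex $x$ of $H'$ in the $K_r$ side already has $|S\cup (V(H_{vw,2})\setminus\{x\})|=p+r-3\geqslant p+q-2$ available neighbors (since $q\leqslant r-1$), so nothing forces an edge into $R$, and the inequality $e(Q)+e(G[V(Q),R])\geqslant v(Q)$ fails for such components. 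Your tree-component case would break for the same reason.

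The uniqueness phase has a second, independent gap: from $d_{H_{vw}}(x)\geqslant q$ for $x\in R$ and $\sum_{x\in R}d_{H_{vw}}(x)\leqslant q+r$ you conclude $|R|=2$, but $q|R|\leqslant q+r$ only gives $|R|\leqslant 1+r/q$, which permits $|R|\geqslant 3$ whenever $r\geqslant 2q$ (allowed here, e.g.\ $p=q=2$, $r=4$). Ruling out three vertices each meeting only the $K_q$ side requires a further argument that you do not supply. Finally, even where your estimates go through, several of them are tight exactly at the small parameters $q=2$, $r=4$; the paper must there exhibit and refute five explicit near-extremal configurations $F_1,\dots,F_5$ by checking they are not saturated, and any correct proof along your lines would have to confront the analogous boundary cases, which your proposal does not anticipate.
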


Let $G$ be an extremal graph for $K_p\cup K_q\cup K_r$ of order $n>3(p-2)+q(q+1)+r(r+1)$ when $r\geqslant p+q$. By Lemma \ref{7}, we have $e(G)\leqslant e(H(n;p,q,r))$. Let $v$ be a vertex of minimum degree in $G$. By Lemma \ref{detla} (i), we have $d(v)=p-2$. Write $S=N_G(v)$ and $\overline{S}=V(G)\setminus N_G(v)$. If $p=2$, then we set $S=\emptyset$ and $\overline{S}=V(G$). By Lemma \ref{detla} (iii), we have
\begin{equation}\label{qr}
e(G[\overline{S}])\leqslant\binom{q+1}{2}+\binom{r+1}{2}.
\end{equation}

By Lemma \ref{re} (i), we know $H_{vw}= H_{vw,1}\cup H_{vw,2}$ with $H_{vw,1}\cong K_q$ and $H_{vw,2}\cong K_r$ is a subgraph of $G$. Write $V(H_{vw,1})=\left\{u_1, \cdots, u_q\right\}$ and $V(H_{vw,2})=\left\{v_1, \cdots, v_r\right\}$.

Let $H$ be an auxiliary graph with $V(H)=V(H_{vw})\cup V(H_{vu_1})$ and $E(H)=E(H_{vw})\cup E(H_{vu_1})$ (see Figure \ref{H}). For $1\leqslant i,j\leqslant 2$, write $V(H_{vu_1,i})\cap V(H_{vw,j})=L_{ij}$ and $|L_{ij}|=\ell_{ij}$.
\begin{figure}[h]
  \centering
  \includegraphics[width=0.5\textwidth]{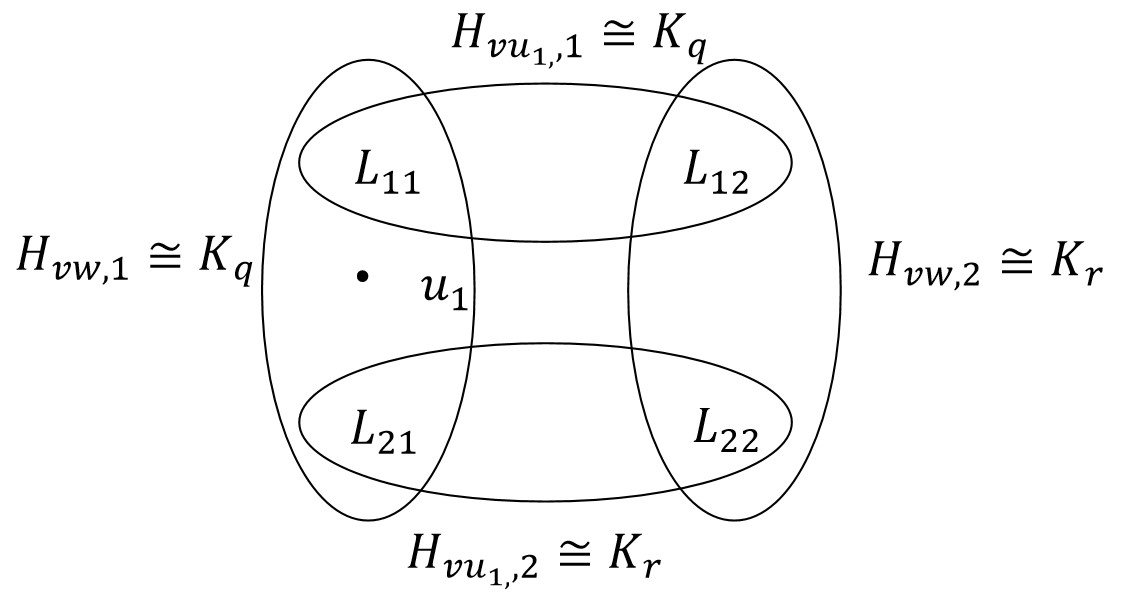}
  \caption{Auxiliary graph $H$.}\label{H}
\end{figure}
Then we have
\begin{align*}
e(H)=& e(H_{vw,1})+e(H_{vw,2})+e(H_{vu_1,1})+e(H_{vu_1,2})-\sum_{i,j=1}^2e(H[L_{ij}]) \notag\\
=&\binom{q}{2}+\binom{r}{2}+\binom{q}{2}+\binom{r}{2}-\sum_{i,j=1}^2\binom{\ell_{ij}}{2} \notag\\
=&\binom{q+1}{2}+\binom{r+1}{2}+f(\ell_{11},\ell_{12},\ell_{21},\ell_{22}),
\end{align*}
where
\begin{align*}
f(\ell_{11},\ell_{12},\ell_{21},\ell_{22})=&\frac{1}{2}\left[(q^2-3q)+(r^2-3r)-(\ell_{11}+\ell_{12})^2-(\ell_{21}+\ell_{22})^2\right.\\
 & \left.+2(\ell_{11}\ell_{12}+\ell_{21}\ell_{22})+(\ell_{11}+\ell_{12}+\ell_{21}+\ell_{22})\right].
\end{align*}
Note $e(G[\overline{S}])=e(H)+e(G[\overline{S}]\setminus E(H))$. Hence,
\begin{equation}\label{S-}
e(G[\overline{S}])=\binom{q+1}{2}+\binom{r+1}{2}+f(\ell_{11},\ell_{12},\ell_{21},\ell_{22})+e(G[\overline{S}]\setminus E(H)),
\end{equation}
and
\begin{equation}\label{6}
e(G[\overline{S}])\geqslant\binom{q+1}{2}+\binom{r+1}{2}+f(\ell_{11},\ell_{12},\ell_{21},\ell_{22}).
\end{equation}

We claim that $\ell_{11}+\ell_{12}\geqslant q-1$. If $\ell_{11}+\ell_{12}\leqslant q-2$, then there is an edge, say $xy$, in $G[V(H_{vu_1})\setminus V(H_{vw})]$. Hence, $G[S\cup\{x,y\}]\cup H_{vw}$ is a subgraph isomorphic to $K_p\cup K_q\cup K_r$ in $G$, which is a contradiction. Similarly, $\ell_{21}+\ell_{22}\geqslant r-1$ and $\ell_{12}+\ell_{22}\geqslant r-1$ hold.

Since $u_1\in V(H_{vw,1})$ and $u_1\notin V(H_{vu_1,1})$, $\ell_{11}+\ell_{21}\leqslant q-1$ holds. By Lemma \ref{re} (ii), we have $V(H_{vw,1})\setminus\{u_1\}\subseteq V(H_{vu_1})$. Furthermore by the definition of $L_{11}$ and $L_{21}$, we have $V(H_{vw,1})\setminus\{u_1\}\subseteq L_{11}\cup L_{21}$, so $\ell_{11}+\ell_{21}\geqslant q-1$. Then we have $\ell_{11}+\ell_{21}= q-1$.

So we have the following bounds for $\ell_{ij}$.
\begin{equation}\label{k1112}
\left\{
\begin{array}{ll}
      q-1\leqslant\ell_{11}+\ell_{12}\leqslant q; & \hbox{} \\
      r-1\leqslant\ell_{21}+\ell_{22}\leqslant r; & \hbox{} \\
      \ell_{11}+\ell_{21}= q-1; & \hbox{} \\
      r-1\leqslant\ell_{12}+\ell_{22}\leqslant r. & \hbox{}
    \end{array}
\right.
\end{equation}

Now we will make effort to prove $\ell_{12}=0$ (see Lemma \ref{nonthing}) and $\ell_{11}=q-1$ (see Lemma \ref{q-1}), which are key points to determine the extremal graph for $K_p\cup K_q\cup K_r$.
\begin{lemma}\label{5}
$|V(H_{vu_1,1})\cap V(H_{vw,2})|\leqslant 1$.
\end{lemma}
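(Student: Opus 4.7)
The plan is to argue by contradiction: suppose $\ell_{12}\ge 2$ and pick $a,a'\in L_{12}$. Since $L_{12}\subseteq V(H_{vw,2})$ which is a $K_r$, we have $a\sim a'$; by Lemma~\ref{detla}(ii) we have $S\subseteq N(a)\cap N(a')$, so $G[S\cup\{a,a'\}]\cong K_p$. The strategy is then to complete this $K_p$ to a $K_p\cup K_q\cup K_r$ subgraph of $G$, which would contradict saturation, or else to derive a contradiction from the edge-count bound in (\ref{qr}) via the auxiliary function $f$.

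First I would enumerate the possible values of $(\alpha,\beta):=(\ell_{11}+\ell_{12},\,\ell_{21}+\ell_{22})$ using the constraints in (\ref{k1112}). The combination $(\alpha,\beta)=(q,r)$ is ruled out, since together with $\ell_{11}+\ell_{21}=q-1$ it would force $\ell_{12}+\ell_{22}=r+1$, violating $\ell_{12}+\ell_{22}\le r$. In the case $(\alpha,\beta)=(q-1,r-1)$, solving the linear relations in (\ref{k1112}) for the $\ell_{ij}$ in terms of $\ell_{11}$ and substituting into the formula for $f$ yields
$$f=-2+\ell_{12}(r-q+2\ell_{11}).$$
Since $\ell_{12}\ge 2$ and $r-q\ge p\ge 2$, this gives $f\ge 2>0$, contradicting $f\le 0$, which follows from (\ref{6}) and (\ref{qr}).

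The remaining cases $(\alpha,\beta)\in\{(q,r-1),(q-1,r)\}$ correspond to the structurally rigid situations $V(H_{vu_1,1})\subseteq V(H_{vw})$ and $V(H_{vu_1,2})\subseteq V(H_{vw})$, respectively. In each, the natural choice $K_q=V(H_{vw,1})$ and $K_r=V(H_{vu_1,2})$ avoids $\{a,a'\}$ but fails to be disjoint because $|L_{21}|\ge\ell_{12}-1\ge 1$. I would resolve this by swapping one or two vertices between $V(H_{vw,1})$ and $V(H_{vu_1,2})$ to produce disjoint cliques, exploiting the cross-edges forced by $V(H_{vu_1,1})$ being a $K_q$ (so $L_{11}$ is completely joined to $L_{12}$) and by the assumption $r\ge p+q$ (which leaves enough room in the large $K_r$ to absorb the swap). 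The resulting $K_p\cup K_q\cup K_r$ in $G$ contradicts saturation.

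The main obstacle is in these last two cases: carrying out the swap so that both disjointness and the clique property are preserved simultaneously. The hypothesis $r\ge p+q$ is essential there, giving the structural slack needed to locate replacement vertices that respect all the adjacencies coming from $H_{vw}$ and $H_{vu_1}$.
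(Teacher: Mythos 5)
Your reduction to the four cases for $(\ell_{11}+\ell_{12},\,\ell_{21}+\ell_{22})$ matches the paper, and your first two cases are sound: ruling out $(q,r)$ via $\ell_{12}+\ell_{22}\leqslant r$ is equivalent to the paper's count $\sum\ell_{ij}\leqslant q+r-1$, and in the case $(q-1,r-1)$ your closed form $f=\ell_{12}(r-q+2\ell_{11})-2$ (from $\ell_{21}=\ell_{12}$, $\ell_{22}=r-1-\ell_{12}$) is correct and gives $f\geqslant 2>0$ immediately, which is in fact cleaner than the paper's bound $f\geqslant r-4$.

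The gap is in the cases $(q,r-1)$ and $(q-1,r)$, and it is not a matter of unfinished bookkeeping: the strategy ``complete $G[S\cup\{a,a'\}]$ to a $K_p\cup K_q\cup K_r$ inside $G$ by swapping vertices'' cannot succeed in general, because there are configurations satisfying every constraint available at that point in which $G$ contains no copy of $K_p\cup K_q\cup K_r$ at all. Concretely, the paper's Case 3 with $\ell_{11}=0$ (so $\ell_{12}=q$, $\ell_{21}=q-1$, $\ell_{22}=r-q$) leads, for $p=q=2$ and $r=4$, to the graph $G=F_1\cup I_{n-8}$ of Figure 2; there $\{a,a'\}=\{v_1,v_2\}$, and $F_1-\{v_1,v_2\}$ contains no $K_2\cup K_4$ (its unique $K_4$ is $\{u_2,y,v_3,v_4\}$, after which $u_1$ and $x$ are nonadjacent), so no swap can produce the forbidden subgraph. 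The contradiction must instead come from the other half of saturation: the paper pins the graph down completely and exhibits a non-edge ($xv_1$) whose addition creates no $K_2\cup K_2\cup K_4$. Similarly, in the case $(q-1,r)$ the equality analysis forces $e(G[\overline{S}]\setminus E(H))=0$, whence every vertex of degree at least $p-1$ lies in $S\cup V(H)$, a set of only $p+q+r-1$ vertices; so again $G$ (and even $G+u_1v_1$) contains no $K_p\cup K_q\cup K_r$ and nothing can be ``found'' in $G$. Your argument in these two cases needs to be reoriented: once the counting forces equality in (\ref{qr}), one must determine the graph's structure and derive a violation of saturation, not of $K_p\cup K_q\cup K_r$-freeness.
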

\begin{proof}[\rm{\textbf{Proof}.}]
Suppose to the contrary that $|V(H_{vu_1,1})\cap V(H_{vw,2})|=\ell_{12}\geqslant 2$. By (\ref{k1112}), $\ell_{11}\leqslant q-2$ and $\ell_{22}\leqslant r-2$. Hence, we have
$r-1\leqslant \ell_{21}+\ell_{22}\leqslant \ell_{21}+(r-2).$
Then $\ell_{21}\geqslant 1$ holds.  By $\ell_{21}\leqslant q-1$, we have
$r-1\leqslant \ell_{21}+\ell_{22}\leqslant (q-1)+\ell_{22}.$
Hence, $\ell_{22}\geqslant r-q\geqslant p\geqslant 2$ holds.

Since $\ell_{11}+\ell_{12}+\ell_{21}+\ell_{22}\leqslant q+r-1$, the case $\ell_{11}+\ell_{12}=q$ and $\ell_{21}+\ell_{22}=r$ does not appear. We distinguish three cases to obtain contradictions.

\noindent\textbf{Case 1.} $\ell_{11}+\ell_{12}= q-1$ and $\ell_{21}+\ell_{22}= r-1$.

In this case, $f(\ell_{11},\ell_{12},\ell_{21},\ell_{22})=\ell_{11}\ell_{12}+\ell_{21}\ell_{22}-2.$ Since $\ell_{21}\geqslant 1$ and $\ell_{22}\geqslant 2$, we have $\ell_{21}\ell_{22}\geqslant r-2$. Hence,
$$f(\ell_{11},\ell_{12},\ell_{21},\ell_{22})\geqslant 0+(r-2)-2=r-4.$$
The assumption $\ell_{12}\geqslant 2$ implies that $q=\ell_{11}+\ell_{12}+1\geqslant 3$, and then $r\geqslant p+q\geqslant 5$. From (\ref{6}), we know
$$e(G[\overline{S}])\geqslant\binom{q+1}{2}+\binom{r+1}{2}+r-4>\binom{q+1}{2}+\binom{r+1}{2},$$
which is a contradiction to (\ref{qr}).

\noindent\textbf{Case 2.} $\ell_{11}+\ell_{12}= q-1$ and $\ell_{21}+\ell_{22}=r$.

In this case, $f(\ell_{11},\ell_{12},\ell_{21},\ell_{22})=\ell_{11}\ell_{12}+\ell_{21}\ell_{22}-r-1.$
Recall that $\ell_{21}\leqslant q-1$ and $\ell_{22}\leqslant r-2$. Then we have
$r=\ell_{21}+\ell_{22}\leqslant \ell_{21}+(r-2),$
and
$r=\ell_{21}+\ell_{22}\leqslant (q-1)+\ell_{22}.$
Hence, $\ell_{21}\geqslant 2$ and $\ell_{22}\geqslant 3$. Then $\ell_{21}\ell_{22}\geqslant 2(r-2).$ Hence,
$$f(\ell_{11},\ell_{12},\ell_{21},\ell_{22})\geqslant 0+2(r-2)-r-1=r-5.$$
The assumption $\ell_{12}\geqslant 2$ implies that $q=\ell_{11}+\ell_{12}+1\geqslant 3$, and then $r\geqslant p+q\geqslant 5$. By (\ref{S-}), we have
$$e(G[\overline{S}])\geqslant\binom{q+1}{2}+\binom{r+1}{2}+r-5+e(G[\overline{S}]\setminus E(H))\geqslant\binom{q+1}{2}+\binom{r+1}{2}.$$
Then, by (\ref{qr}), we have
$e(G[\overline{S}])=\binom{q+1}{2}+\binom{r+1}{2}$, which implies $e(G[\overline{S}]\setminus E(H))=0$. Hence, $u_1\nsim v_1$ and the vertex of degree at least $p-1$ are in $S\cup V(H)$. Since $v(H)=q+r+1$, we have $|S\cup V(H)|=p+q+r-1$, which contradicts to the assumption $K_p\cup K_q\cup K_r\subseteq G+u_1v_1$.

\noindent\textbf{Case 3.} $\ell_{11}+\ell_{12}=q$ and $\ell_{21}+\ell_{22}= r-1$.

In this case, $f(\ell_{11},\ell_{12},\ell_{21},\ell_{22})=\ell_{11}\ell_{12}+\ell_{21}\ell_{22}-q-1.$ Since $\ell_{21}+\ell_{22}= r-1$, there is a vertex $y\in V(H)\setminus V(H_{vw})$ such that $G[L_{21}\cup L_{22}\cup\{y\}]\cong K_r$.

If $\ell_{11}\geqslant1$, then $q=\ell_{11}+\ell_{12}\geqslant 3$. Since $\ell_{12}\geqslant 2$, $\ell_{21}\geqslant 1$, and $\ell_{22}\geqslant 2$, we have $\ell_{11}\ell_{12}\geqslant q-1$ and $\ell_{21}\ell_{22}\geqslant r-2$. Then
$$f(\ell_{11},\ell_{12},\ell_{21},\ell_{22})\geqslant (q-1)+(r-2)-q-1=r-4.$$
By $q\geqslant 3$, $r\geqslant q+2\geqslant 5$ holds. From (\ref{6}), we have
$$e(G[\overline{S}])\geqslant\binom{q+1}{2}+\binom{r+1}{2}+r-4>\binom{q+1}{2}+\binom{r+1}{2},$$
which is a contradiction to (\ref{qr}).

So, we may suppose $\ell_{11}=0$. Then $\ell_{12}=q$. By (\ref{k1112}), we know that $\ell_{21}= q-1$, and then $\ell_{22}=r-q$. Then we have
$f(\ell_{11},\ell_{12},\ell_{21},\ell_{22})=f(0,\,q,\,q-1,\,r-q)=-q^2+(q-1)r-1.$

Without loss of generality, assume that $L_{12}=\{v_1,\cdots,v_q\}$. We claim that there exists some vertex, say $v_1$, in $\{v_1,\cdots,v_q\}$ such that, in $G$, $u_1\nsim v_1$ and $N_G(u_1)\cap N_G(v_1)=S$. Otherwise, we have $u_1\sim v_j$ or $ S\subset N_G(u_1)\cap N_G(v_j)$ for any $1\leqslant j \leqslant q$, which implies that $e(G[\overline{S}]\setminus E(H))\geqslant q$, and then by (\ref{S-})
\begin{equation*}
e(G[\overline{S}])\geqslant\binom{q+1}{2}+\binom{r+1}{2}-q^2+(q-1)r+q-1>\binom{q+1}{2}+\binom{r+1}{2}.
\end{equation*}

So, $u_1\nsim v_1$ and $N_G(u_1)\cap N_G(v_1)= S$ in $G$. Considering the graph $G_1=G+u_1v_1$, then there is a subgraph $K_p\cup K_q\cup K_r$ in $G_1$. Furthermore, $G_1[S\cup\{u_1,v_1\}]\cong K_p$, since $N_G(u_1)\cap N_G(v_1)=S$. Noting that $|S\cup V(H)|=p+q+r-1$, there is a vertex $x$ in $\overline{S}\setminus V(H)$ such that $x$ has at least $q-1$ neighbors in $G[\overline{S}]$, which implies that $e(G[\overline{S}]\setminus E(H))\geqslant q-1$. Then by (\ref{S-})
\begin{equation*}
e(G[\overline{S}])\geqslant\binom{q+1}{2}+\binom{r+1}{2}-q^2+(q-1)r+q-2\geqslant\binom{q+1}{2}+\binom{r+1}{2},
\end{equation*}
with equality holding if and only if $q=2$ and $r=4$. When $q=2$ and $r=4$, we have $p=2$, and then $S=\emptyset$ and $\overline{S}=V(G)$. Furthermore, $x$ has unique neighbor in $G$. Since there is a subgraph $K_2\cup K_2\cup K_4$ in $G_1=G+u_1v_1$, we have $x$ is adjacent to $v_2$. Then, $e(G)=13$ and $E(G)= E(H_{vw})\cup\{xv_2,u_2v_3,u_2v_4,yu_2,yv_3,yv_4\}$. Let $F_1$ be the graph shown in Figure 2. Then $G= F_1\cup I_{n-8}$, while there is not a subgraph $K_2\cup K_2\cup K_4$ in $G+xv_1$.
\end{proof}
\begin{figure}
  \centering
  \begin{minipage}{0.15\linewidth}
  \centering
  \includegraphics[width=\textwidth]{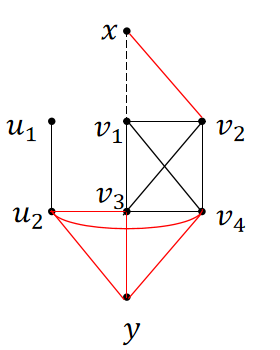}
  \caption*{\footnotesize{$F_1$}}
  \end{minipage}
  \begin{minipage}{0.15\linewidth}
  \centering
  \includegraphics[width=\textwidth]{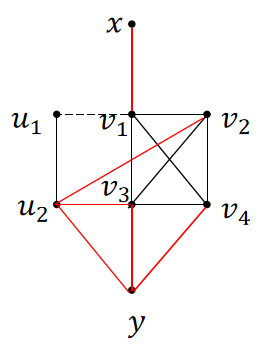}
  \caption*{\footnotesize{$F_2$}}
  \end{minipage}
    \begin{minipage}{0.15\linewidth}
  \centering
  \includegraphics[width=\textwidth]{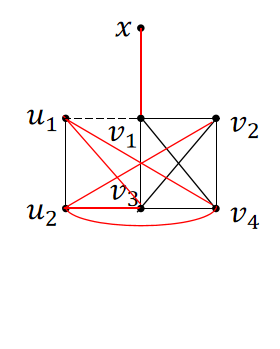}
  \caption*{\footnotesize{$F_3$}}
  \end{minipage}
    \begin{minipage}{0.15\linewidth}
  \centering
  \includegraphics[width=\textwidth]{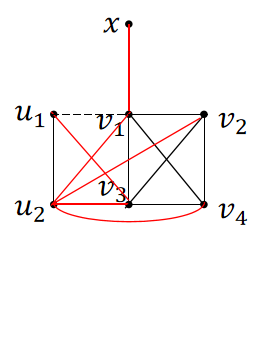}
  \caption*{\footnotesize{$F_4$}}
  \end{minipage}
    \begin{minipage}{0.15\linewidth}
  \centering
  \includegraphics[width=\textwidth]{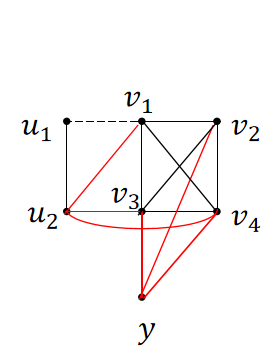}
  \caption*{\footnotesize{$F_5$}}
  \end{minipage}
  \caption{Graphs $F_1$, $F_2$, $F_3$, $F_4$, and $F_5$.}
\end{figure}

\begin{lemma}\label{nonthing1}
If $\ell_{12}=1$, then $\ell_{11}\neq q-2$.
\end{lemma}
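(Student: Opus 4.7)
The plan is to assume for contradiction that $\ell_{12}=1$ and $\ell_{11}=q-2$, and then extract a contradiction from the edge-count bound (\ref{qr}) applied to the inequality (\ref{6}) for $f$. First I would feed the hypothesis into the constraints (\ref{k1112}): the identity $\ell_{11}+\ell_{21}=q-1$ forces $\ell_{21}=1$, and the two inequalities $r-1\leq\ell_{12}+\ell_{22}\leq r$, $r-1\leq\ell_{21}+\ell_{22}\leq r$ combined give $\ell_{22}\in\{r-2,r-1\}$. This cuts the analysis into exactly two subcases, matching the shape of the case split in Lemma \ref{5}.

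Next I would compute $f$ directly. Using the clean rewriting
\[
f(\ell_{11},\ell_{12},\ell_{21},\ell_{22}) \;=\; \tfrac{1}{2}\bigl[(q^{2}-3q)+(r^{2}-3r) - \textstyle\sum_{i,j}\ell_{ij}^{2}+\sum_{i,j}\ell_{ij}\bigr],
\]
substitution yields $f=q+r-6$ when $\ell_{22}=r-2$ and $f=q-4$ when $\ell_{22}=r-1$. In the subcase $\ell_{22}=r-2$, the assumption $r\geq p+q\geq q+2$ gives $f\geq 2q-4>0$ whenever $q\geq 3$, which, chained through (\ref{6}) and (\ref{qr}), is already contradictory; only the tiny configuration $p=q=2$, $r\in\{4,5\}$ requires a separate argument, handled exactly as in Case 3 of Lemma \ref{5} by locating one additional edge in $E(G[\overline S])\setminus E(H)$ via a vertex $x\in\overline S\setminus V(H)$ that meets $V(H_{vw})$ (such a vertex must exist because $n>3(p-2)+q(q+1)+r(r+1)$ and because $G$ is saturated, so $G+u_1 v_j$ must contain a copy of $K_p\cup K_q\cup K_r$ for some non-edge $u_1 v_j$, and then Lemma \ref{re}\,(ii) pins $x$ inside $H_{vx}$, producing the needed extra edge).

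The harder subcase is $\ell_{22}=r-1$, where $f=q-4$ is nonpositive for $q\in\{2,3,4\}$ and so the naive use of (\ref{6})–(\ref{qr}) fails; this is the main obstacle. The plan for this subcase is to exploit the very rigid structure it imposes: since $\ell_{21}+\ell_{22}=r$, the set $V(H_{vu_1,2})$ lies entirely inside $V(H_{vw})$, with $L_{21}=\{u_1\}$ and $L_{22}\subseteq V(H_{vw,2})$ of size $r-1$. Let $v_0$ be the unique vertex of $V(H_{vw,2})\setminus L_{22}$. Then $u_1$ is adjacent to every vertex of $L_{22}$, and for the non-edge $u_1 v_0$, applying Lemma \ref{re}\,(i) to $G+u_1 v_0$ and then Lemma \ref{re}\,(ii) forces $u_1$ into a clique of $H_{vv_0}$, which in turn forces extra adjacencies between $L_{11}\cup\{x\}$ (the ``spare'' vertex of $V(H_{vu_1,1})$) and $V(H_{vw,2})$. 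Each such forced adjacency is an edge of $G[\overline S]\setminus E(H)$, so via (\ref{S-}) the number of such extra edges $k$ satisfies $f+k\leq 0$, i.e.\ $k\leq 4-q$. I would then show, by a short combinatorial count of how many such forced incidences arise in each of $q=2,3,4$, that $k$ is strictly larger than $4-q$ except in a finite list of exotic configurations (analogous to $F_{1},\dots,F_{5}$), which are finally ruled out by verifying directly that each fails to be $K_p\cup K_q\cup K_r$-saturated. The main difficulty, as in Lemma \ref{5}, lies entirely in this structural bookkeeping for small parameters; the large-parameter cases fall immediately to the $f$-bound.
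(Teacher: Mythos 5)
Your setup is right and matches the paper: feeding $\ell_{12}=1$, $\ell_{11}=q-2$ into (\ref{k1112}) forces $\ell_{21}=1$ and $\ell_{22}\in\{r-2,r-1\}$, and the values $f=q+r-6$ and $f=q-4$ are correct. The first subcase is essentially fine (the only residual configuration is $q=2$, $r=4$, not $r\in\{4,5\}$; the paper handles it by showing the forced graph $F_2\cup I_{n-8}$ is not saturated rather than by exhibiting an extra edge, but either route would close it). The gap is in the subcase $\ell_{22}=r-1$, which you correctly identify as the crux but do not actually prove. Your structural starting point there is wrong: $L_{21}$ cannot be $\{u_1\}$, because $H_{vu_1}$ is by definition disjoint from the copy of $K_p$ on $S\cup\{v,u_1\}$ in $G+vu_1$, so $u_1\notin V(H_{vu_1})$; in fact $L_{21}$ is a single vertex of $V(H_{vw,1})\setminus\{u_1\}$ (the paper calls it $u_q$). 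Consequently your next assertion, that $u_1$ is adjacent to every vertex of $L_{22}$, has no justification --- and if it were true it would place $r-1$ edges of the form $u_1v_j$ into $E(G[\overline{S}])\setminus E(H)$, which by (\ref{S-}) and (\ref{qr}) would finish the subcase instantly since $q-4+r-1>0$; that this makes the ``hard'' case trivial should have been a warning sign. The remainder of your plan (``show by a short combinatorial count that $k$ exceeds $4-q$ except in a finite list of exotic configurations'') is a statement of intent rather than an argument.

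The paper's actual treatment of this subcase is quite different and is the real content of the lemma: it adds the edge $vv_2$ (from the minimum-degree vertex $v$, not from $u_1$), notes that the resulting $K_p$ must be $G[S\cup\{v,v_2\}]$, proves in Claim \ref{cl1} that the only clique of order $r$ in $G[\overline{S}]-\{v_2\}$ is the one induced by $U=\{u_q,v_1,v_3,\dots,v_r\}$ --- itself a substantial edge-counting argument over the size $k$ of $V(W)\setminus U$, with two exceptional graphs $F_3$ and $F_4$ eliminated by checking they are not saturated --- and then shows that no $K_q$ can sit inside $G[\overline{S}]-(U\cup\{v_2\})$, using Lemma \ref{re}(ii) to exclude the candidate cliques through the vertex $z'\in\overline{S}\setminus V(H)$. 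None of this is recoverable from your sketch, so the proof is incomplete precisely at its central point.
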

\begin{proof}[\rm{\textbf{Proof}.}]

Without loss of generality, assume $L_{12}=\{v_1\}$. Suppose to the contrary that $\ell_{11}= q-2$. Set $L_{11}=\emptyset$ when $q=2$. Without loss of generality, assume $L_{11}=\{u_2,\cdots,u_{q-1}\}$ when $q\geqslant 3$. By $\ell_{11}+\ell_{21}=q-1$, we have $\ell_{21}=1$, and then $L_{21}=\{u_q\}$. Since $\ell_{11}+\ell_{12}=q-1$, there is a vertex $x\in V(H)\setminus V(H_{vw})$ such that $G[L_{11}\cup L_{12}\cup\{x\}]\cong K_q$. Furthermore, we have $r-2\leqslant\ell_{22}\leqslant r-1$ by (\ref{k1112}).

\noindent\textbf{Case 1.} $\ell_{22}=r-2$.

Without loss of generality, assume $L_{22}=\{v_2,\cdots,v_{r-1}\}$. Since $\ell_{21}+\ell_{22}=r-1$, there is a vertex $y\in V(H)\setminus V(H_{vw})$ such that $G[L_{21}\cup L_{22}\cup\{y\}]\cong K_r$. In this case, we have $f(\ell_{11},\ell_{12},\ell_{21},\ell_{22})=f(q-2,\,1,\,1,\,r-2)=q+r-6.$ By (\ref{S-}) and the facts $q\geqslant 2$ and $r\geqslant p+q\geqslant 4$, we have
$$e(G[\overline{S}])\geqslant\binom{q+1}{2}+\binom{r+1}{2}+q+r-6\geqslant\binom{q+1}{2}+\binom{r+1}{2},$$
with equality holding if and only if $q=2$ and $r=4$. When $q=2$ and $r=4$, we have $p=2$, and then $S=\emptyset$ and $\overline{S}=V(G)$. Furthermore, $e(G)=13$ and $E(G)=E(H_{vw})\cup \{u_2v_2,u_2v_3,xv_1,yu_2,yv_2,yv_3\}$. Let $F_2$ be the graph shown in Figure 2. Then $G=F_2\cup I_{n-8}$, while there is not a subgraph $K_2\cup K_2\cup K_4$ in $G+u_1v_1$.

\noindent\textbf{Case 2.} $\ell_{22}=r-1$.

The assumption $\ell_{22}=r-1$ implies that $L_{22}=\{v_2,\cdots,v_r\}$. In this case, we have $f(\ell_{11},\ell_{12},\ell_{21},\ell_{22})=f(q-2,\,1,\,1,\,r-1)=q-4.$ By (\ref{S-}), we have
\begin{equation}\label{8}
e(G[\overline{S}])=\binom{q+1}{2}+\binom{r+1}{2}+e(G[\overline{S}]\setminus E(H))+q-4.
\end{equation}

Considering the graph $G_2=G+vv_2$, $G_2[S\cup \{v,v_2\}]\cong K_p$ since $N_G(v)\cap N_G(v_2)=S$, and there is a subgraph $K_r$ in $G[\overline{S}]-\{v_2\}$. Let $U=\{u_q,v_1,v_3,\cdots,v_r\}$ and $U'=V(H)\setminus (U\cup \{v_2\})$, namely, $U'=\{u_1,\cdots,u_{q-1},x\}$.

\begin{claim}\label{cl1}
If there is a clique of order $r$ in $G[\overline{S}]-\{v_2\}$, then this clique is induced by $U$.
\end{claim}
\begin{proof}[\rm{\textbf{Proof of Claim \ref{cl1}}.}]
Suppose to the contrary that there is a subgraph $W\cong K_r$ such that $V(W)\neq U$ in $G[\overline{S}]-\{v_2\}$. Let $U_0=V(W)\cap U$ and $U_0'=V(W)\setminus U$. Write $|U_0'|=k$. Then $e(W[U_0,U_0'])=k(r-k)$.

Note that $U_0'=V(W)\setminus U\subseteq \overline{S}\setminus (U\cup \{v_2\})=U'\cup (\overline{S}\setminus V(H))$. We claim that $U_0'\subseteq U'$. Otherwise, there is a vertex $z\in \overline{S}\setminus V(H)$ such that $z\in U_0' \subseteq V(W)$. Hence, there are at least $r-1$ edges in $E(G[\overline{S}])\setminus E(H)$. By (\ref{8}), we have
$$e(G[\overline{S}])\geqslant \binom{q+1}{2}+\binom{r+1}{2}+q+r-5>\binom{q+1}{2}+\binom{r+1}{2}.$$

Then $U_0'\subseteq U'$. Hence, $1\leqslant k\leqslant q\leqslant r-p\leqslant r-2$.

When $k=r-2$, we have $p=2$, $q=r-2$, and $e(W[U_0,U_0'])=2(r-2)$. Furthermore, $S=\emptyset$, $\overline{S}=V(G)$, $|U_0|=2$, and $U_0'= \{u_1,\cdots,u_{q-1},x\}$. The fact $u_1$, $x\in U_0'\subseteq V(W)$ implies that $u_1x\in E(W)$. So $u_1x\in E(W)\setminus E(H)$. Note that $E(H[U_0,U_0'])\subseteq E(H[U,U_0'])=E(H[\{u_q,v_1\}, U_0'])$. If $u_q$ or $v_1\notin U_0$, without loss of generality, assume that $v_1\notin U_0$, then $e(H[U_0,U_0'])=e(H[\{u_q\},U_0'])\leqslant r-2$. By (\ref{8}), we have
$$e(G\setminus E(H))\geqslant e(W[U_0,U_0'])-e(H[U_0,U_0'])+1\geqslant 2(r-2)-(r-2)+1\geqslant r-1.$$
By (\ref{8}), we have
$$e(G)\geqslant \binom{q+1}{2}+\binom{r+1}{2}+q+r-5>\binom{q+1}{2}+\binom{r+1}{2}.$$

Hence, $U_0=\{u_q,v_1\}$, and then $u_q\sim v_1$. Note that $u_1\sim u_q$, $u_1\nsim v_1$, $x\nsim u_q$, and $x\sim v_1$ in $H$. Hence, $\{u_qv_1,u_1v_1,xu_1,xu_q\}\subseteq E(G)\setminus E(H)$. Then $e(G\setminus E(H))\geqslant 4$. By (\ref{8}), we have
$$e(G)\geqslant \binom{q+1}{2}+\binom{r+1}{2}+q>\binom{q+1}{2}+\binom{r+1}{2}.$$
Hence, $1\leqslant k\leqslant r-3$.

Note that $E(H[U_0,U_0'])\subseteq E(H[U,U_0'])=E(H[\{u_q,v_1\}, U_0'])$. If $|\{u_q,v_1\}\cap U_0|\leqslant 1$, then $e(H[U_0,U_0'])\leqslant k$. Hence,
$$e(G[\overline{S}]\setminus E(H))\geqslant e(W[U_0,U_0'])-e(H[U_0,U_0'])= k(r-k)-k\geqslant k(r-1-k)\geqslant r-2.$$
By (\ref{8}), we have
$$e(G[\overline{S}])\geqslant \binom{q+1}{2}+\binom{r+1}{2}+q+r-6\geqslant\binom{q+1}{2}+\binom{r+1}{2},$$
with the equality holding if and only if $k=1$, $q=2$, and $r=4$. Furthermore, $e(H[U_0,U_0'])=k$, and then there is a vertex of $\{u_q,v_1\}$ in $U_0$. Without loss of generality, assume $u_q\in U_0$ and $U_0'=\{u_1\}$.  When $q=2$ and $r=4$, we have $p=2$, and then $S=\emptyset$ and $\overline{S}=V(G)$. Furthermore, $E(G)=E(H_{vw})\cup\{u_1v_3,u_1v_4,u_2v_2,u_2v_3,u_2v_4,xv_1\}$. Let $F_3$ be the graph shown in Figure 2. Then $G=F_3\cup I_{n-7}$, while there is not a subgraph $K_2\cup K_2\cup K_4$ in $G+u_1v_1$.

If $\{u_q,v_1\}\subseteq U_0$, then $u_qv_1\in E(G[\overline{S}])\setminus E(H)$, and $e(H[U_0,U_0'])= e(H[\{u_q,v_1\},U_0'])\leqslant 2k$. Hence,
$$e(G[\overline{S}]\setminus E(H))\geqslant e(W[U_0,U_0'])-e(H[U_0,U_0'])+1\geqslant k(r-k)-2k+1=k(r-2-k)+1\geqslant r-2.$$
By (\ref{8}), we have
$$e(G[\overline{S}])\geqslant \binom{q+1}{2}+\binom{r+1}{2}+q+r-6\geqslant\binom{q+1}{2}+\binom{r+1}{2},$$
with equality holding if and only if $k=1$, $q=2$, and $r=4$. When $k=1$, without loss of generality, assume $U_0=\{u_1\}$. When $q=2$ and $r=4$, we have $p=2$, and then $S=\emptyset$ and $\overline{S}=V(G)$. Furthermore, $E(G)=E(H_{vw})\cup\{u_1v_3,u_2v_1,u_2v_2,u_2v_3,u_2v_4,xv_1\}$. Let $F_4$ be the graph shown in Figure 2. Then $G=F_4\cup I_{n-7}$, while there is not a subgraph $K_2\cup K_2\cup K_4$ in $G+u_1v_1$.
\end{proof}

Recall that there is a subgraph $K_p\cup K_q\cup K_r$ in $G_2=G+vv_2$ and $G_2[S\cup \{v,v_2\}]\cong K_p$. Hence, there is a clique of order $r$ in $G[\overline{S}]-\{v_2\}$. By Claim \ref{cl1}, $G[U]$ is the unique clique of order $r$ in the graph $G[\overline{S}]-\{v_2\}$, and then $u_q\sim v_1$. We may prove that there is not a subgraph $K_q$ in $G[\overline{S}]- (U\cup\{v_2\})$.

Suppose to the contrary that there is a subgraph $K_q$ in $G[\overline{S}]- (U\cup\{v_2\})$. We claim that the graph induced by $U'$ is not a $K_q$. Otherwise, $G[U']\cong K_q$, and then $xu_1\in E(G[\overline{S}])\setminus E(H)$. Recall that $u_qv_1\in E(G[\overline{S}])\setminus E(H)$. Then $e(G[\overline{S}]\setminus E(H))\geqslant 2$. By (\ref{8}), we have
$$e(G[\overline{S}])\geqslant \binom{q+1}{2}+\binom{r+1}{2}+q-2\geqslant\binom{q+1}{2}+\binom{r+1}{2},$$
with equality holding if and only if $q=2$. When $q=2$, we have $p=2$, and then $S=\emptyset$ and $\overline{S}=V(G)$. Furthermore, $E(G)=E(H_{vw})\cup \{xu_1,xv_1,u_2v_1,\cdots,u_2v_r\}$, and then the vertices of degree at least one are in $V(H)$. Note that $v(H)=r+3$ and there are two nonadjacent vertices $u_i$ and $v_j$ in $V(H)$, otherwise $G[V(H)]$ is a clique of order $r+3$, and then $e(G)>\binom{3}{2}+\binom{r+1}{2}$. While the fact $v(H)=r+3$ implies that there is not a subgraph $K_2\cup K_2\cup K_r$ in $G+u_iv_j$.

Since the graph induced by $U'$ is not the $K_q$, there is at least one vertex $z'\in \overline{S}\setminus V(H)$ in the clique of order $q$. Furthermore, there is only one vertex $z'\in \overline{S}\setminus V(H)$ in the clique of order $q$. Otherwise, there are two vertices $z'$ and $z''\in \overline{S}\setminus V(H)$ in the clique of order $q$. Then $G[S\cup\{z',z''\}]\cup H_{vw}$ is a subgraph $K_p\cup K_q\cup K_r$ in $G$. Hence, there are $q-1$ vertices of $\{u_1,\cdots,u_{q-1},x\}$ in the clique of order $q$. Denote this clique by $W'$. Since $u_1\nsim x$, we have $u_1\in V(W')$ or $x\in V(W')$.

If $u_1\in V(W')$, then $z'\sim u_1$. By Lemma \ref{re} (ii), we have $z'\in V(H_{vu_1})$, which is a contradiction to $z'\in \overline{S}\setminus V(H)$.

If $x\in V(W')$, then $z'\sim x$. Hence, $G[S\cup \{z',x\}]\cup H_{vw}$ is a subgraph $K_p\cup K_q\cup K_r$ in $G$.
\end{proof}

\begin{lemma}\label{nonthing2}
If $\ell_{12}=1$, then $\ell_{11}\neq q-1$.
\end{lemma}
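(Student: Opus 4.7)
The plan is to assume both $\ell_{12}=1$ and $\ell_{11}=q-1$ and derive a contradiction. The constraints in (\ref{k1112}) immediately force $\ell_{21}=0$ and $\ell_{22}\in\{r-2,r-1\}$. Without loss of generality take $L_{11}=\{u_2,\ldots,u_q\}$ and $L_{12}=\{v_1\}$, so that $V(H_{vu_1,1})=\{u_2,\ldots,u_q,v_1\}$ is a $K_q$ in $G$ and in particular $v_1\sim u_j$ for $j=2,\ldots,q$. The case split on $\ell_{22}$ then runs in the same style as Lemmas \ref{5} and \ref{nonthing1}.

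For $\ell_{22}=r-2$, direct computation gives $f(q-1,1,0,r-2)=r-4$, and since $r\geqslant p+q\geqslant 4$, inequality (\ref{6}) together with (\ref{qr}) yields an immediate contradiction whenever $r>4$. The residual case $r=4$ forces $p=q=2$; denoting by $y_1,y_2$ the two vertices of $V(H_{vu_1,2})\setminus V(H_{vw})$, the fact that $V(H_{vu_1,2})$ is a $K_4$ in $G$ gives $y_1y_2\in E(G)$, and then $\{u_1,u_2\}\cup\{y_1,y_2\}\cup V(H_{vw,2})$ contains a copy of $K_2\cup K_2\cup K_4$ inside $G$, contradicting that $G$ is $K_p\cup K_q\cup K_r$-free. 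Thus no $F_i$-style gadget is needed in Case A.

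For the harder case $\ell_{22}=r-1$, direct computation gives $f(q-1,1,0,r-1)=-2$, and combining (\ref{S-}) with (\ref{qr}) yields the tight budget $e(G[\overline{S}]\setminus E(H))\leqslant 2$. Write $L_{22}=\{v_2,\ldots,v_r\}$ and let $y$ be the unique vertex of $V(H_{vu_1,2})\setminus V(H_{vw})$; then $y\sim v_j$ for $j=2,\ldots,r$. I would next study saturation at the non-edge $vv_2$: since $N_G(v)\cap N_G(v_2)=S$, the forced $K_p$ in $G+vv_2$ is $S\cup\{v,v_2\}$, and some $K_r$ must lie in $G[\overline{S}]-\{v_2\}$. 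An argument in the style of Claim \ref{cl1} then pins down this $K_r$ as $U:=\{v_1,v_3,\ldots,v_r,y\}$: for any other candidate clique $W$, analyzing $U_0=V(W)\cap U$ and $U_0'=V(W)\setminus U\subseteq\{u_1,\ldots,u_q\}\cup(\overline{S}\setminus V(H))$ shows that $W$ forces more than two edges outside $E(H)$ whenever $r\geqslant 5$, contradicting the budget. This forces $v_1y\in E(G)$, consuming one of the two non-$H$ edges; a parallel saturation test at a second well-chosen non-edge (for instance between $u_1$ and some $v_j$, or between $y$ and some $u_i$) should force a second non-$H$ edge whose placement conflicts with Lemma \ref{re}(ii) applied to $y$ and yields the desired contradiction.

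The main obstacle will be the tight budget of two non-$H$ edges and the borderline sub-case $q=2,r=4$ of Case B, where the Claim \ref{cl1} analog degenerates into an equality rather than a strict inequality. As in Lemmas \ref{5} and \ref{nonthing1}, these residual small configurations will have to be handled by enumerating the resulting candidate graphs (analogues of $F_1,\ldots,F_4$) and exhibiting, in each one, a specific non-edge $e$ for which $G+e$ fails to contain $K_2\cup K_2\cup K_4$, contradicting saturation.
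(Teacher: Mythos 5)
Your outline follows the paper's general strategy (edge-budget via $f$, a saturation test at a chosen non-edge, a Claim-\ref{cl1}-style uniqueness argument for the forced $K_r$, and enumeration of small exceptional graphs), but it has a genuine gap in the case $\ell_{22}=r-1$, which is in fact the only case: with $\ell_{11}=q-1$ and $\ell_{12}=1$, (\ref{k1112}) gives $\ell_{21}=0$, and then $r-1\leqslant\ell_{21}+\ell_{22}$ together with $\ell_{12}+\ell_{22}\leqslant r$ force $\ell_{22}=r-1$ exactly, so your Case A is vacuous. The real problem is your choice of auxiliary non-edge and the unfinished ending. Testing saturation at $vv_2$ is too weak: after the $K_p$ on $S\cup\{v,v_2\}$ is removed, the remaining $V(H)$-vertices $\{u_1,\ldots,u_q\}\cup\{v_1,v_3,\ldots,v_r,y\}$ number exactly $q+r$ and can host $K_q\cup K_r$ as soon as $yv_1\in E(G)$ (the $K_q$ simply sits on $\{u_1,\ldots,u_q\}=V(H_{vw,1})$). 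So $G+vv_2$ forces only the single extra edge $yv_1$, well within your budget of $2$, and yields no contradiction. Your proposed finish --- that a second test ``should force a second non-$H$ edge whose placement conflicts with Lemma \ref{re}(ii) applied to $y$'' --- is not an argument: with a budget of two non-$H$ edges, forcing a second one is not by itself contradictory, and you never exhibit the claimed conflict.

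The paper closes the case with an idea your sketch is missing. By the budget there is some $v_j\in\{v_2,\ldots,v_r,y\}$ with $u_1\nsim v_j$ and $N_G(u_1)\cap N_G(v_j)=S$ (otherwise at least $r$ non-$H$ edges arise), and one tests saturation at $u_1v_j$. Now the $K_p$ consumes $u_1$ as well, the forced $K_r$ is pinned to $U=\{v_1,v_3,\ldots,v_r,y\}$ (costing the non-$H$ edge $yv_1$), and the disjoint $K_q$ has only the $q-1$ vertices $u_2,\ldots,u_q$ of $V(H)$ available; it must therefore use exactly one vertex $z'$ outside $V(H)$ (two such vertices would give an edge disjoint from $H_{vw}$ and hence $K_p\cup K_q\cup K_r$ in $G$), contributing $q-1$ further non-$H$ edges. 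Together with $yv_1$ this exceeds the budget unless $q=2$, and the $q=2$ subcase is settled by explicitly determining $G$ and exhibiting a non-edge whose addition creates no $K_2\cup K_2\cup K_r$. Without this ``no room for the $K_q$'' count, your proof does not close.
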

\begin{proof}[\rm{\textbf{Proof}.}]
Suppose to the contrary that $\ell_{11}= q-1$, which implies that $L_{11}=\{u_2,\cdots,u_q\}$. Without loss of generality, assume $L_{12}=\{v_1\}$. By (\ref{k1112}), we have $\ell_{21}=0$ and $\ell_{22}=r-1$. Furthermore, we have
$f(\ell_{11},\ell_{12},\ell_{21},\ell_{22})=f(q-1,\,1,\,0,r-1)=-2.$
By (\ref{S-}), we have
\begin{equation}\label{9}
e(G[\overline{S}])=\binom{q+1}{2}+\binom{r+1}{2}+e(G[\overline{S}]\setminus E(H))-2.
\end{equation}

Since $\ell_{22}=r-1$, there is a vertex, say $y\in V(H)\setminus V(H_{vw})$ such that $G[L_{22}\cup \{y\}]\cong K_r$ in $G$. There is some vertex in $\{v_2,\cdots,v_r,y\}$, say $v_2$, such that $u_1\nsim v_2$ and $N_G(u_1)\cap N_G(v_2)=S$. Otherwise, we have $e(G[\overline{S}]\setminus E(H))\geqslant r$, and then by (\ref{9})
$$e(G[\overline{S}])\geqslant \binom{q+1}{2}+\binom{r+1}{2}+r-2>\binom{q+1}{2}+\binom{r+1}{2}.$$

Considering the graph $G_1=G+u_1v_2$,  $G_1[S\cup\{u_1,v_2\}]\cong K_p$ since $N_G(u_1)\cap N_G(v_2)=S$, and there is a subgraph $K_r$ in $G[\overline{S}]-\{u_1,v_2\}$. Let $U=\{v_1,v_3,\cdots,v_r,y\}$ and $U'=V(H)\setminus (U\cup\{u_1,v_2\})$, namely, $U'=\{u_2,\cdots,u_q\}$.

~\\

~\\

\begin{claim}\label{cl2}
If there is a clique of order $r$ in $G[\overline{S}]-\{u_1,v_2\}$, then this clique is induced by $U$.

\end{claim}
\begin{proof}[\rm{\textbf{Proof of Claim \ref{cl2}}.}]
Suppose to the contrary that there is a subgraph $W\cong K_r$ such that $V(W)\neq U$ in $G[\overline{S}]-\{u_1,v_2\}$. Let $U_0=V(W)\cap U$ and $U_0'=V(W)\setminus U$. Write $|U_0'|=k$. Then $e(W[U_0,U_0'])=k(r-k)$.

Note that $U_0'=V(W)\setminus U\subseteq \overline{S}\setminus (U\cup \{u_1,v_2\})=U'\cup (\overline{S}\setminus V(H))$. We claim that $U_0'\subseteq U'$. Otherwise, there is a vertex $z\in \overline{S}\setminus V(H)$ such that $z\in U_0' \subseteq V(W)$. Hence, there are at least $r-1$ edges in $E(G[\overline{S}])\setminus E(H)$. By (\ref{9}), we have
$$e(G[\overline{S}])\geqslant \binom{q+1}{2}+\binom{r+1}{2}+r-3>\binom{q+1}{2}+\binom{r+1}{2}.$$

Then $U_0'\subseteq U'$. Hence, $1\leqslant k\leqslant q-1\leqslant r-p-1\leqslant r-3$. Note that $E(H[U_0,U_0'])\subseteq E(H[U,U_0'])=E(H[\{v_1\}, U_0'])$, we have $e(H[U_0,U_0'])= e(H[\{v_1\},U_0'])\leqslant k$, and then
$$e(G[\overline{S}]\setminus E(H))\geqslant e(W[U_0,U_0'])-e(H[U_0,U_0'])\geqslant k(r-k)-k=k(r-1-k).$$
Note that $k(r-1-k)\geqslant r-2$. By (\ref{9}), we have
$$e(G[\overline{S}])\geqslant \binom{q+1}{2}+\binom{r+1}{2}+r-4\geqslant\binom{q+1}{2}+\binom{r+1}{2},$$
with equality holding if and only if $k=1$ and $r=4$. When $k=1$, $U_0'=\{u_2\}$. When $r=4$, we have $p=2$ and $q=2$, and then $S=\emptyset$ and $\overline{S}=V(G)$. Furthermore, $E(G)=E(H_{vw})\cup\{u_2v_1,u_2v_3,u_2v_4,yv_2,yv_3,yv_4\}$. Let $F_5$ be the graph shown in Figure 2. Then $G=F_5\cup I_{n-7}$, while there is not a subgraph $K_2\cup K_2\cup K_4$ in $G+u_1v_1$.
\end{proof}

Recall that there is a subgraph $K_p\cup K_q\cup K_r$ in $G_1=G+u_1v_2$ and $G_1[S\cup\{u_1,v_2\}]\cong K_p$. Hence, there is a clique of order $r$ in $G[\overline{S}]-\{u_1,v_2\}$. By Claim \ref{cl2}$, G[U]$ is the unique clique of order $r$ in $G[\overline{S}]-\{u_1,v_2\}$, and then $y\sim v_1$. We may prove that there is not a subgraph $K_q$ in $G[\overline{S}]- (U\cup \{u_1,v_2\})$.

Suppose to the contrary that there is a subgraph $K_q$ in $G[\overline{S}]- (U\cup \{u_1,v_2\})$. Note that $|V(H)\setminus\{u_1,v_1,\cdots,v_r,y\}|=|\{u_2,\cdots,u_q\}|=q-1$. Hence, there is at least one vertex $z'\in \overline{S}\setminus V(H)$ in a clique of order $q$. Furthermore, there is only one vertex $z'\in \overline{S}\setminus V(H)$ in a clique of order $q$. Otherwise, there are two vertices $z'$ and $z''\in \overline{S}\setminus V(H)$. Then $G[S\cup\{z',z''\}]\cup H_{vw}$ is a subgraph $K_p\cup K_q\cup K_r$ in $G$. Hence, $G[\{z',u_2,\cdots,u_q\}]\cong K_q$. Then there are $q-1$ edges in $E(G[\overline{S}])\setminus E(H)$. Recall that $yv_1\in E(G[\overline{S}])\setminus E(H)$. By (\ref{9}), we have
$$e(G[\overline{S}])\geqslant \binom{q+1}{2}+\binom{r+1}{2}+q-2\geqslant\binom{q+1}{2}+\binom{r+1}{2},$$
with equality holding if and only if $q=2$. When $q=2$, we have $p=2$, and then $S=\emptyset$ and $\overline{S}=V(G)$. Furthermore, $N_G(z')=\{u_2\}$ and $E(G)=E(H_{vw})\cup \{u_2v_1,yv_1,\cdots,yv_r,z'u_2\}$. Hence, the vertices of degree at least one are in $V(H)\cup \{z'\}$.

Note that $N_G(u_2)\cap N_G(v_2)=\{v_1\}$ and $G[\{v_1,v_3,\cdots,v_r,y\}]$ is the unique clique of order $r$ in $G-\{u_1,v_2\}$. Since $u_1\nsim v_j$ for $1\leqslant j\leqslant r$ and $u_1\nsim y$, $G[\{v_1,v_3,\cdots,v_r,y\}]$ is the unique clique of order $r$ in $G-\{v_2\}$. Furthermore, in $G_2=G+u_2v_2$, we have $G_2[\{u_2,v_2\}]\cong K_2$. Note that $N_G(z')\setminus \{u_2\}=\emptyset$. Hence, $z'$ is not in the subgraph $K_2\cup K_2\cup K_r$ in $G_2$. Since $v(H)=r+3$, there is not a subgraph $K_2\cup K_2\cup K_r$ in $G_2$.
\end{proof}

\begin{lemma}\label{nonthing}
$|V(H_{vu_1,1})\cap V(H_{vw,2})|=0$.
\end{lemma}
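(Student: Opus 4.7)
The plan is to extract the conclusion $\ell_{12}=0$ directly from the three preceding lemmas together with the coordinate inequalities in (\ref{k1112}), since all of the case work has already been front-loaded into those lemmas. First I would record the ceiling supplied by Lemma \ref{5}, which says $\ell_{12}\leqslant 1$, so the only remaining possibility to kill is $\ell_{12}=1$. I would then suppose for contradiction that $\ell_{12}=1$.

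Next I would use the first line of (\ref{k1112}), namely $q-1\leqslant \ell_{11}+\ell_{12}\leqslant q$, to conclude that under the assumption $\ell_{12}=1$ the value of $\ell_{11}$ is pinned to one of the two integers $q-2$ or $q-1$. The closing move is then to invoke Lemma \ref{nonthing1} to rule out $\ell_{11}=q-2$ and Lemma \ref{nonthing2} to rule out $\ell_{11}=q-1$. With both remaining cases eliminated, the assumption $\ell_{12}=1$ is untenable, so $\ell_{12}=0$, which is exactly the claim $|V(H_{vu_1,1})\cap V(H_{vw,2})|=0$.

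In this particular lemma there is no genuine obstacle, since the heavy lifting (the subgraph-hunting arguments that force the various $F_i\cup I_{n-k}$ contradictions) has been absorbed into Lemmas \ref{5}, \ref{nonthing1}, and \ref{nonthing2}. The only thing to verify carefully is that the case split I propose is exhaustive: the upper bound $\ell_{12}\leqslant 1$ from Lemma \ref{5} combined with $\ell_{11}+\ell_{12}\geqslant q-1$ and $\ell_{11}+\ell_{12}\leqslant q$ from (\ref{k1112}) really does leave exactly the two possibilities $(\ell_{11},\ell_{12})\in\{(q-2,1),(q-1,1)\}$ that the two earlier lemmas were engineered to forbid.
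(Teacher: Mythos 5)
Your proposal is correct and follows exactly the same route as the paper: invoke Lemma \ref{5} to reduce to the case $\ell_{12}=1$, use the bounds in (\ref{k1112}) to pin $\ell_{11}$ to $q-2$ or $q-1$, and then eliminate both possibilities via Lemmas \ref{nonthing1} and \ref{nonthing2}. Your added check that the case split is exhaustive is a sensible precaution, but there is no substantive difference from the paper's argument.
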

\begin{proof}[\rm{\textbf{Proof}.}]
By Lemma \ref{5}, suppose to the contrary that $\ell_{12}= 1$. By (\ref{k1112}), we have $q-2 \leqslant\ell_{11}\leqslant q-1$. By Lemmas \ref{nonthing1} and \ref{nonthing2}, $\ell_{11}\neq q-2$ and $\ell_{11}\neq q-1$. Hence, $\ell_{12}= 0$.
\end{proof}
By Lemma \ref{re} (i), $G$ has a subgraph $H_{vu_i}= H_{vu_i,1}\cup H_{vu_i,2}$ with $H_{vu_i,1}\cong K_q$ and $H_{vu_i,2}\cong K_r$ for any vertex $u_i$. By using the similar arguments as that of Lemma \ref{nonthing}, we may prove $|V(H_{vu_i,1})\cap V(H_{vw,2})|=0$ for $2\leqslant i\leqslant q$, and $|V(H_{vv_j,2})\cap V(H_{vw,1})|=0$ for $1\leqslant j\leqslant r$.

\begin{lemma}\label{q-1}
$|V(H_{vu_1,1})\cap V(H_{vw,1})|=q-1$.
\end{lemma}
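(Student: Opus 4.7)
The plan is to combine Lemma \ref{nonthing} with the structural inequalities on the overlap parameters $\ell_{ij}$ already recorded in (\ref{k1112}). The target is to show that $\ell_{11} := |V(H_{vu_1,1}) \cap V(H_{vw,1})| = q-1$, which is a purely arithmetic consequence once the preceding lemmas are available.

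First, I would invoke Lemma \ref{nonthing}, which asserts $\ell_{12} = 0$. Substituting this into the first line of (\ref{k1112}), namely $q - 1 \leqslant \ell_{11} + \ell_{12} \leqslant q$, immediately yields $\ell_{11} \geqslant q-1$. On the other hand, the third line of (\ref{k1112}) gives the equality $\ell_{11} + \ell_{21} = q-1$; since $\ell_{21} \geqslant 0$, this forces $\ell_{11} \leqslant q-1$. Combining the two bounds yields $\ell_{11} = q-1$, which is the claim.

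Given the preparation already done, there is no genuine obstacle at this step. The real work lies in establishing Lemma \ref{nonthing} itself, which in turn rests on Lemmas \ref{5}, \ref{nonthing1}, and \ref{nonthing2}; those lemmas rule out the possibility that the $K_q$-component $H_{vu_1,1}$ overlaps the $K_r$-component $H_{vw,2}$ of $H_{vw}$. Once that structural rigidity has been locked in, the overlap identity $\ell_{11}+\ell_{21}=q-1$ (coming from Lemma \ref{re}(ii) applied to $V(H_{vw,1})\setminus\{u_1\}$) and the size estimate $\ell_{11}+\ell_{12}\geqslant q-1$ (which reflects the fact that any two vertices outside $V(H_{vu_1})\cup V(H_{vw})$ adjacent in $G$ would complete a forbidden $K_p\cup K_q\cup K_r$) pinch $\ell_{11}$ from both sides to give the exact value $q-1$.
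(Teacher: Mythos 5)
Your proposal is correct and follows essentially the same route as the paper: both rest on Lemma \ref{nonthing} together with the overlap constraints, the paper obtaining the lower bound by re-running the ``edge $xy$ outside $V(H_{vw})$'' argument (which is exactly how the inequality $\ell_{11}+\ell_{12}\geqslant q-1$ in (\ref{k1112}) was derived) and the upper bound from $u_1\notin V(H_{vu_1,1})$ (which is what $\ell_{11}+\ell_{21}=q-1$ encodes). Citing the recorded inequalities directly, as you do, is a legitimate and slightly tidier presentation of the same argument.
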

\begin{proof}[\rm{\textbf{Proof}.}]
Since $u_1\in V(H_{vw,1})$ and $u_1\notin V(H_{vu_1,1})$, $|V(H_{vu_1,1})\cap V(H_{vw,1})|\leqslant q-1$ holds. If $|V(H_{vu_1,1})\cap V(H_{vw,1})|<q-1$, then there is an edge, say $xy$, in $G[V(H_{vu_1,1})\setminus V(H_{vw,1})]$. Furthermore, $x$, $y\notin V(H_{vw,2})$ by Lemma \ref{nonthing}. Hence, $x$, $y\notin V(H_{vw})$, and then $G[S\cup\{x,y\}]\cup H_{vw}$ is a subgraph $K_p\cup K_q\cup K_r$ in $G$. This is a contradiction.
\end{proof}

Similarly, we may prove $|V(H_{vu_i,1})\cap V(H_{vw,1})|=q-1$ for $2\leqslant i\leqslant q$, and $|V(H_{vv_j,2})\cap V(H_{vw,2})|=r-1$ for $1\leqslant j\leqslant r$. Now, we have prepared to prove Theorem \ref{pqr}.

\begin{proof}[\rm{\textbf{Proof of Theorem \ref{pqr}.}}]
Noting that $|V(H_{vu_i,1})\cap V(H_{vw,1})|=q-1$, we may let $V(H_{vu_i,1})\setminus V(H_{vw,1})=\{x_i\}$ for any $1\leqslant i\leqslant q$.

We claim that $x_1=\cdots=x_q$. Suppose to the contrary that $x_1\neq x_2$. Note that $x_2$ is adjacent to all the vertices in $V(H_{vw,1})\setminus\{u_2\}$ in $G$. Hence $u_1x_2\in E(G)$. By Lemma \ref{nonthing}, we have $x_2\notin V(H_{vw,2})$. Then the subgraph $G[S\cup\{u_1,x_2\}]\cup H_{vu_1,1}\cup H_{vw,2}$ is isomorphic to $K_p\cup K_q\cup K_r$, which is a contradiction. Hence, $x_1=\cdots=x_q:=x$, and then $G[V(H_{vw,1})\cup\{x\}]\cong K_{q+1}$.

Similarly, we may prove that there is a unique vertex $y$ such that $G[V(H_{vw,2})\cup\{y\}]\cong K_{r+1}$. Hence, we have
\begin{align*}
e(G[\overline{S}])&\geqslant e(G[V(H_{vw,1})\cup\{x\}])+e(G[V(H_{vw,2})\cup\{y\}])\\
&=\binom{q+1}{2}+\binom{r+1}{2}.
\end{align*}
By (\ref{qr}), we have
$e(G[\overline{S}])=\binom{q+1}{2}+\binom{r+1}{2}$.
Furthermore, $u_i\nsim v_j$, $x\nsim v_j$, and $y\nsim u_i$ for any $1\leqslant i\leqslant q$, $1\leqslant j\leqslant r$, and $N_G(z)=\emptyset$ for any vertex $z\notin  V(H_{vw})\cup \{x\}\cup\{y\}$.

We claim that $x\neq y$. Suppose to the contrary that $x=y$. We consider the auxiliary graph $G+u_iv_j$. The vertices with degree at least $p-1$ are in $S\cup V(H_{vw})\cup\{x\}$. While
$|S\cup V(H_{vw})\cup\{x\}|=p+q+r-1$, there is not a subgraph $K_p\cup K_q\cup K_r$ in $G+u_iv_j$.

Hence, $x\neq y$. Note that $x\nsim y$, since $e(G[\overline{S}])=\binom{q+1}{2}+\binom{r+1}{2}$. Then, by Lemma \ref{detla} (ii), we have
\begin{align*}
G&=G[S]\vee(G[V(H_{vw,1})\cup\{x\}]\cup G[V(H_{vw,2})\cup\{y\}]\cup I_{n-p-q-r})\\
&\cong K_{p-2}\vee(K_{q+1}\cup K_{r+1}\cup I_{n-p-q-r})\\
&\cong H(n;p,q,r).
\end{align*}
Then, we have
$e(G)=e(H(n;p,q,r))=(p-2)(n-p+2)+\binom{p-2}{2}+\binom{q+1}{2}+\binom{r+1}{2}.$
\end{proof}


\begin{thebibliography}{99}

\bibitem{C} Y. Chen, Minimum $C_5$-saturated graphs. J. Graph Theory 61 (2009) 111-126.

\bibitem{CFFGJM} G. Chen, J. Faudree, R. Faudree, R. Gould, M. Jacobson, C. Magnant, Results and problems on saturation numbers for linear forests. Bull. Inst. Combin. Appl. 75 (2015) 29-46.

\bibitem{CLLYZ} S. Cao, H. Lei, X. Lian, S. Yao, J. Zhang, Saturation numbers for $tP_k$ with $k$ less than 6. Discrete Appl. Math. 325 (2023) 108-119.

\bibitem{PAJ} P. Erd\H os, A. Hajnal, J. W. Moon, A problem in graph theory. Amer. Math. Monthly 71 (1964) 1107-1110.

\bibitem{JRJ} B. Currie, J. Faudree, R. Faudree, J. Schmitt, A survey of minimum saturated graphs. Electron. J. Combin. 18 (2011), Dynamic Survey 19, 98 pages.

\bibitem{RMRM} R. Faudree, M. Ferrara, R. Gould, M. Jacobson, $tK_p$-saturated graphs of minimum size. Discrete Math. 309 (2009) 5870-5876.

\bibitem{HL} Z. He, M. Lu, Saturation number of $tK_{\ell,\ell,\ell}$ in the complete tripartite graph. Electron. J. Combin. 28 (2021) 4-20.

\bibitem{LZ} L. K\'asonyi, Z. Tuza, Saturated graphs with minimal number of edges. J. Graph Theory 10 (1986) 203-210.

\bibitem{LGS} T. {\L}uczak, R. Gould, J. Schmitt, Constructive upper bounds for cycle saturated graphs of minimum size. Electron. J. Combin. 13 (2006) R29.

\bibitem{LSWZ} Y. Lan, Y. Shi, Y. Wang, J. Zhang, The saturation number of $C_6$. https://arxiv.org/abs /2108.03910.

\bibitem{O} L. T. Ollmann, $K_{2,2}$-saturated graphs with a minimal number of edges. in: Proc. 3rd Southeastern Conference on Combinatorics, Graph Theory and Computing, (1972) 367-392.

\bibitem{PS} O. Pikhurko, J. Schmitt, A note on minimum $K_{2,3}$-saturated graphs. Australasian J. Combin. 40 (2008) 211-215.


\end{thebibliography}
\end{document}